\documentclass[envcountsame]{llncs}

\usepackage{amsmath,amssymb,enumerate,gastex,mathrsfs,cite}
\usepackage[all]{xy}

\pagestyle{plain}

\newcommand{\inv}{^{-1}}

\newcommand{\bigast}{\mathop{\hbox{\Large $\ast$}}}

\begin{document}

\title{Submonoids and rational subsets of groups with infinitely many ends}
\author{Markus Lohrey\inst{1} \and Benjamin Steinberg\inst{2,}\thanks{The authors would like to
acknowledge the support of DFG Mercator program.  The second author is also supported by an NSERC grant.}
\institute{Universit\"at
Leipzig, Institut f\"ur Informatik, Germany
 \and
School of Mathematics and Statistics,
Carleton University, ON, Canada\\
\email{lohrey@informatik.uni-leipzig.de,
bsteinbg@math.carleton.ca}}}

\maketitle

\begin{abstract}
In this paper we show that the membership problems for finitely generated
submonoids and for rational subsets are recursively equivalent for groups with two or more ends.
\end{abstract}

\section{Introduction}
Let $G$ be a finitely generated group with finite generating set $\Sigma$.
Put $\widetilde{\Sigma} = \Sigma\cup \Sigma\inv$ and let $\pi\colon \widetilde{\Sigma}^*
 \to G$ be the canonical projection from the free monoid on $\widetilde{\Sigma}$ to $G$.
The (uniform) submonoid membership problem for $G$ takes as
input words $w,w_1,\ldots, w_n\in \widetilde{\Sigma}^*$ and asks whether
$\pi(w)$ belongs to the submonoid generated by $\pi(w_1),\ldots, \pi(w_n)$.
Note that this problem generalizes the subgroup membership problem (or generalized word problem) for $G$.

The (uniform) rational subset membership problem for $G$ takes as input a word
$w\in  \widetilde{\Sigma}^*$ and a finite automaton $\mathscr A$ over
$\widetilde{\Sigma}^*$ and asks whether $\pi(w)\in \pi(L(\mathscr A))$.
Recall that a finite automaton $\mathscr A$ over
$\widetilde{\Sigma}$ is a tuple $\mathscr A = (Q,\widetilde{\Sigma}, \delta, q_0, F)$ where
$\delta \subseteq Q \times \widetilde{\Sigma} \times Q$,
$q_0 \in Q$  is the initial state and $F \subseteq Q$ is the set of final states.
The language $L(\mathscr A)$ recognized by $\mathscr A$ is the set of all finite
words from $\widetilde{\Sigma}^*$ that label a path from the initial state
$q_0$ to some state from $F$.
A set of the form $\pi(L(\mathscr A))$ is called a rational subset of $G$.

Study of rational subsets of groups began with the pioneering work of
Benois  on free groups~\cite{Benois69} and Eilenberg and Sch\"utz\-en\-ber\-ger on
abelian groups~\cite{EiSchu69}.  Rational subsets of groups were further
studied in~\cite{AniSeif75}, where it was
shown that a subgroup is a rational subset if and only if
it is finitely generated, from which one obtains an easy proof of Howson's
Theorem (in essence the same as Stallings' proof~\cite{Stal83}).  Recently,
there has been renewed interest in the subject of rational subsets of
groups~\cite{Gru99,Gilman96,KaSiSt06,LohSte08,Ned00,Rom99,LohSen08}, especially in connection to solving
equations over groups~\cite{DiGuHa05,DahGu09} and the isomorphism problem for toral relatively hyperbolic groups~\cite{DahGr08}.  On the
other hand, work of Stephen~\cite{Ste93} and Ivanov, Margolis and
Meakin~\cite{IvMaMe01} on the word problem for certain inverse monoids has
spurred some interest in the submonoid membership problem.  In particular,
the word problem for one-relator inverse monoids motivated the
paper~\cite{MaMeSu05}, where membership in positively generated submonoids
of certain groups given by monoid presentations is considered.  Recently,
the authors have shown that the submonoid membership
problem is undecidable in free metabelian groups of rank $2$~\cite{LohSt09}.

Trivially, decidability of the rational subset membership problem implies
decidability of the submonoid membership problem.  In previous
work~\cite{LohSte08}, the authors showed that these two problems are
recursively equivalent for graph groups (also known as right-angled Artin
groups or free partially commutative groups) and gave the precise class of
graph groups for which these problems are decidable.  The authors were also
able to prove that these problems are recursively equivalent for certain
amalgamated free products (including free products) and HNN extensions with finite edge groups.  This
led the authors to conjecture that the rational subset membership
and submonoid membership problems are recursively
equivalent for groups with two or more ends.  The main
result of this paper is to establish our conjecture.

Recall that if $\Gamma$ is a locally finite graph, then the space of ends of
$\Gamma$ is the projective limit $\varprojlim \pi_0(\Gamma\setminus F)$ where
$F$ runs over all finite subgraphs of $\Gamma$ and $\pi_0(X)$ is the set of
connected components of $X$.  If $G$ is a group with finite generating set
$\Sigma$, then the number of ends $e(G)$ of $G$ is the cardinality of the
space of ends of its Cayley graph $\Gamma$ with respect to $\Sigma$; it is
well known that this number depends only on $G$ and not $\Sigma$.  Moreover,
it is a result of Hopf that $e(G)$ is either $0,1,2$ or is infinite.   Clearly
$e(G)=0$ if and only if $G$ is finite.  It is well known that $e(G)=2$ if and
only if $G$ is virtually cyclic.  Stallings' famous Ends Theorem~\cite{Stal68,Stal71} says that
$e(G)\geq 2$ if and only if $G$ splits non-trivially as an amalgamated free
product or an HNN extension over a finite subgroup, or, equivalently, $G$ has
an edge-transitively action without inversions on a simplicial tree with no
global fixed points and finite edge stabilizers.

Our main theorem is then the following result.

\begin{theorem}\label{newmain}
Let $G$ be a finitely generated group with two or more ends.  Then the
submonoid membership and rational subset membership problems for $G$ are
recursively equivalent.  Moreover, there is a fixed rational subset of $G$
with undecidable membership problem if and only if there is a fixed
finitely generated submonoid of $G$ with
undecidable membership problem.
\end{theorem}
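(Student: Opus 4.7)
The easy direction is routine: a finitely generated submonoid $\langle g_1,\ldots,g_k\rangle$ is the image of the regular language $\{w_1,\ldots,w_k\}^*\subseteq\widetilde\Sigma^*$, so submonoid membership reduces uniformly to rational subset membership. If $e(G)=2$ then $G$ is virtually cyclic, and applying Grunschlag's transfer theorem to the finite-index subgroup $\mathbb{Z}$ makes the rational subset membership problem decidable; both problems are then trivially recursively equivalent. So one may assume $e(G)=\infty$, and by Stallings' Ends Theorem $G=A*_CB$ or $G=\mathrm{HNN}(A,C,\varphi,t)$ with $C$ finite.

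The real content is the reduction of rational subset membership in $G$ to submonoid membership in $G$. Given $w\in\widetilde\Sigma^*$ and an automaton $\mathscr A=(Q,\widetilde\Sigma,\delta,q_0,F)$, the plan is to construct a finitely generated submonoid $M\subseteq G$ and elements $g_k\in G$, one for each $q_k\in F$, such that $\pi(w)\in\pi(L(\mathscr A))$ iff $g_k\in M$ for some $k$. A group with infinitely many ends is not virtually cyclic and hence contains a free subgroup of rank two, constructible via ping-pong from loxodromic elements acting on the Bass--Serre tree. Pick elements $f_q\in G$, one per state $q\in Q$, in sufficiently ``general position'' with respect to this tree action (for example, suitable high powers of two independent loxodromics). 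Let $M$ be generated by $\{f_p^{-1}\pi(\sigma)f_q:(p,\sigma,q)\in\delta\}$ and set $g_k=f_{q_0}^{-1}\pi(w)f_{q_k}$ for each $q_k\in F$.

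Correctness is meant to follow from the Bass--Serre normal form (or Britton's lemma in the HNN case): an arbitrary product of generators telescopes to some $g_k$ only if consecutive factors match states, because a mismatched pair $f_p f_q^{-1}$ with $p\neq q$ produces, by general position, a non-trivial reduced segment that no neighbouring $\pi(\sigma)$-block can absorb. This forces any witnessing product to encode an $\mathscr A$-run from $q_0$ to some $q_k\in F$ whose label projects in $G$ to $\pi(w)$. The main technical obstacle is proving this uniqueness statement with no decidability hypothesis on the vertex groups $A,B$: the earlier paper~\cite{LohSte08} handled restricted classes of vertex groups, whereas here the key new input must be a purely combinatorial lemma resting only on the finiteness of $C$ and the tree action, so that the submonoid oracle alone does all the work.

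For the ``moreover'' assertion, observe that $M$ depends only on $\mathscr A$ while the finitely many $g_k$'s depend on $w$ and $\mathscr A$; hence a single rational subset $\pi(L(\mathscr A))$ with undecidable membership yields a single submonoid $M$ with undecidable membership. The converse is immediate from the easy direction: a fixed submonoid is a fixed rational subset.
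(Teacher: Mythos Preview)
Your outline has the right shape, but the ``purely combinatorial lemma'' you are hoping for does not hold in the form you describe, and the paper takes a substantially different route to fill the gap. The first problem is that you encode directly in $G$: since $\widetilde\Sigma$ generates $G$, a telescoped block $\pi(\sigma_i\cdots\sigma_j)$ sitting between two state markers can be \emph{any} element of $G$, and in particular can equal $f_{p}f_q^{-1}$ and absorb an adjacent mismatched pair, however ``generic'' the $f_q$ are chosen. The paper avoids this by first invoking~\cite{KaSiSt06,LohSen08} to reduce rational subset membership in $G$ to rational subset membership in a vertex group $H$ of the Stallings splitting, and then proving---via a ping-pong argument on the Bass--Serre tree---that $G$ contains a subgroup isomorphic to the HNN extension $\bigast_{1_X}H$, where $X$ is the maximal finite normal subgroup of $G$. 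Inside this subgroup the automaton labels now lie in the centraliser $N\leq H$ of $X$ and contribute no stable letters, while the state markers $[q]=t^qkt^{-q}$ are built from the fresh stable letter $t\notin H$; only then does Britton's lemma genuinely separate labels from markers.

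Even in that favourable setting your uniqueness claim is false whenever $X\neq 1$. A loop in $\mathscr A$ at state $q$ reading a word that represents $x\in X\cap N$ contributes, after telescoping, exactly $[q]x[q]^{-1}=x$ (centrality of $X\cap N$ in $K=\langle N,t\rangle$ having been arranged). Stripping all such ``$X$-cycles'' from a witnessing product therefore proves only $g\in\Delta^*\iff h\in\pi(X(\mathscr A)^*L(\mathscr A))$, where $X(\mathscr A)\subseteq X$ collects such loop values; this set can be strictly larger than $\pi(L(\mathscr A))$. The paper's fix is an auxiliary automaton $\mathscr A_P$ that also records the set $P$ of states visited: for $\mathscr B=\mathscr A_P$ one has $\pi(X(\mathscr B)^*L(\mathscr B))=\pi(L(\mathscr B))$, because any $X$-loop at a state already visited can be spliced back into an accepting run without changing the visited-set. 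The ``moreover'' clause needs this too: from one undecidable $\pi(L(\mathscr A))$ one obtains finitely many $\mathscr A_P$, at least one of which is undecidable, and it is the submonoid built from \emph{that} $\mathscr A_P$ which witnesses the conclusion---so your claim that a single $M$ comes straight from $\mathscr A$ is not justified.
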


We remark that the proof of the last statement in Theorem~\ref{newmain}
is an existence argument:  we do not give an algorithm
that constructs a finitely generated submonoid with undecidable membership problem
from a rational subset with undecidable membership problem.

The case that $G$ in Theorem~\ref{newmain} has two ends is clear
since a two-ended group is virtually cyclic and hence has a decidable
rational subset membership problem.
So the interesting case is a group with infinitely many ends.

The paper is organized as follows.  Section~\ref {Sec acting on trees}
proves some results about groups acting on trees that
will allow us to reduce Theorem~\ref{newmain} to a
special case.  This special case is then handled in Section~\ref{Sec main lemma}.

\section{Groups acting on trees} \label{Sec acting on trees}

The goal of this section is to reduce our considerations to a particular type of HNN extension.
The notion of a (simplicial) tree is understood in the sense of Serre~\cite{Ser80}. An edge $e$ with initial vertex $v$ and
terminal vertex $w$ is written as $v\xrightarrow{\,e\,}w$.
Then, there is an inverse edge $w\xrightarrow{\,e^{-1}\,}v$, which never equals $e$.
The pair $\{e,e^{-1}\}$ is called a \emph{geometric edge}, as it corresponds to an edge of the geometric realization of the tree.  Often we do not distinguish between an edge and the corresponding geometric edge.
Let $T$ be a tree. If $v,w$ are vertices of a tree $T$, then $[v,w]$ denotes
the geodesic (or reduced) path  from $v$ to $w$ in $T$.
Note that a path in $T$ is geodesic if and only if it does not contain any backtracking, i.e, an edge
followed by its inverse edge.
An automorphism $g$ of $T$ is \emph{without inversions} if,
for each edge $e$ of $T$, $ge\neq e^{-1}$.
Let $G$ be a group acting on $T$ (by
automorphisms of $T$). Then $G$ acts \emph{without inversions on $T$}
if every $g \in G$ is without inversions.  Throughout this paper, we tacitly assume that all actions of groups on trees are without inversions.
We say that $G$ acts \emph{edge-transitively} on $T$
if $G$ acts transitively on the set of geometric edges.
The stabilizer in $G$ of a vertex or edge $x$ of $T$ will be denoted
$G_x$.  Of course, $G_e=G_{e\inv}$ for any edge $e$. A vertex $v$ is called a \emph{global fixed point} if $G = G_v$.
The following lemma is well known, but we include a proof for completeness.

\begin{lemma}\label{uniquemaximalfinite}
Let $G$ be a finitely generated group acting edge-transitively on a tree $T$
without global fixed points and with finite edge stabilizers (thus $G$ has more than
one end). Then the kernel of the action of $G$ on $T$ is the unique maximal finite normal subgroup of $G$.
\end{lemma}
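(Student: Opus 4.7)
The plan is to identify the kernel $K$ of the action as finite and normal, and then to show that every finite normal subgroup of $G$ is forced to act trivially on $T$, hence to lie in $K$. First, note $K$ is visibly normal in $G$, and since $K$ fixes every edge, $K$ is contained in any (finite) edge stabilizer, so $K$ is finite. The task is therefore to prove maximality: any finite normal subgroup $N \trianglelefteq G$ must satisfy $N \subseteq K$.

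The central tool will be the Bruhat--Tits--Serre fixed point theorem: a finite group acting without inversions on a tree has a non-empty fixed point set, which is moreover a subtree. Thus the fixed tree $T^N = \{x \in T : nx = x \text{ for all } n \in N\}$ is a non-empty subtree of $T$. Because $N$ is normal in $G$, for every $g \in G$ we have $g T^N = T^{gNg^{-1}} = T^N$, so $T^N$ is $G$-invariant.

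Now I do a case analysis on $T^N$. If $T^N$ contains some edge $e$, then by $G$-invariance $T^N \supseteq Ge$, and edge-transitivity of $G$ on $T$ forces $T^N$ to contain every edge of $T$ (and hence every vertex, since $T$ has at least one edge and the endpoints of each edge lie in $T^N$). In this case $T^N = T$, meaning $N$ acts trivially and so $N \subseteq K$. Otherwise $T^N$ is an edgeless subtree, hence a single vertex $v$; but $G$-invariance of $\{v\}$ means $gv = v$ for all $g \in G$, making $v$ a global fixed point, contrary to hypothesis. So only the first case occurs, and $N \subseteq K$.

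The main conceptual ingredient is the Bruhat--Tits--Serre fixed point theorem, which is what prevents $T^N$ from being empty; everything else is a straightforward exploitation of normality (to get $G$-invariance of $T^N$) combined with edge-transitivity and the no-global-fixed-points hypothesis (to rule out the two degenerate possibilities for $T^N$). No serious obstacle is expected; the argument is essentially a standard application of Serre's theory of groups acting on trees.
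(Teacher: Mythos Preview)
Your proof is correct and follows essentially the same approach as the paper's: both invoke Serre's fixed point theorem to get a nonempty fixed set for the finite normal subgroup $N$, then combine normality with edge-transitivity and the absence of a global fixed point to force $N$ to fix all of $T$. The paper phrases this slightly more concretely---it picks a fixed vertex $v$, finds $g$ with $gv\neq v$, and observes that $N$ then fixes the geodesic $[v,gv]$ and hence some edge---whereas you package the same idea via the $G$-invariance of the fixed subtree $T^N$; the content is the same.
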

\begin{proof}
Let $G_e$ be an edge stabilizer; it is finite by assumption.  The kernel $X$ of the action of $G$ on $T$ is clearly contained in $G_e$ and hence is a finite normal subgroup.  Let us show that every finite normal subgroup $N$ of $G$ is contained in $X$. Since $N$ is finite, it fixes some vertex $v$ of $T$~\cite[p.~36]{Ser80}.  Since $G$ has no global fixed
point, we can find an element $g\in G$ such that $gv\neq v$.  Let $[v,gv]$ be
the geodesic from $v$ to $gv$ in $T$.  Then since $N=gNg\inv$ fixes $v$ and $gv$, it
fixes $[v,gv]$ (since $T$ is a tree) and hence it fixes some edge of this
geodesic.  Since $G$ acts edge-transitively on $T$ and $N$ is normal,
it follows that $N$ fixes every edge of $T$ and hence $N\subseteq X$.  This completes the proof.
\qed
\end{proof}

The chief result in this section is a reduction to the case of an HNN extension of a very special sort.
Recall that if $H$ is a group and $\varphi\colon A\to B$ is a \emph{partial automorphism}
of $H$, i.e., an isomorphism between subgroups $A$ and $B$ of $H$, then the corresponding
HNN extension is the group $\bigast_{\varphi} H$ given by the presentation $\langle H,t\mid t\inv at=\varphi(a)\ (a\in A)\rangle$.

We first need  some results concerning groups acting on trees, which may be
of interest in their own right.  We recall a result of Tits (cf.~\cite[p.~63]{Ser80})
that an automorphism $g$ of a tree $T$ (without inversions) either
fixes a vertex, in which case it is called \emph{elliptic}, or it leaves
invariant a unique line $T_g$, called its \emph{axis}, on which it acts via
translation by some positive integer length.  Elements of the latter sort are
called \emph{hyperbolic}. Of course, hyperbolic elements have infinite order.

Our next lemma is a geometric version of
a result proved combinatorially (and separately) for amalgamated free products and HNN extensions with finite edge groups~\cite{LohSen05pos}.

\begin{lemma}\label{constructsequence}
Let $G$ be a finitely generated group acting edge-transitively on a tree $T$
without global fixed points and with finite edge stabilizers (thus $G$ has more than
one end). Let $v\xrightarrow{\,e\,}w$
be an edge of $T$ and suppose that $G_e$ is finite proper subgroup of both
$G_v$ and $G_w$.  Then
there is an element $s\in G$ such that $G_e\cap sG_es\inv=X$ where $X$ is the
largest finite normal subgroup of $G$.
\end{lemma}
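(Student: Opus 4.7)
The plan is to give a purely geometric argument inside $T$, building the required element $s$ by constructing a geodesic segment in $T$ with one endpoint edge equal to $e$, the other endpoint edge in the $G$-orbit of $e$, and pointwise stabilizer equal to $X$. Since the joint stabilizer $G_e \cap sG_es^{-1}$ of the two edges $e$ and $se$ equals the pointwise stabilizer of the geodesic from $e$ to $se$, and edge-transitivity guarantees that any edge has the form $se$ for some $s$, finding such a geodesic suffices.

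First I would reduce to the case $X=1$. By Lemma~\ref{uniquemaximalfinite}, $X$ is the kernel of the $G$-action on $T$, so $G/X$ acts faithfully on $T$, edge-transitively, without global fixed points, with finite edge stabilizers, and still with $\overline{G_e}$ proper in $\overline{G_v}$ and $\overline{G_w}$. The identity $G_e \cap sG_es^{-1} = X$ in $G$ is equivalent to $\overline{G_e} \cap \bar s\,\overline{G_e}\,\bar s^{-1} = 1$ in $G/X$, so we may replace $G$ by $G/X$ and seek $s$ with $G_e \cap sG_es^{-1} = 1$ under the assumption of faithful action.

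Second, I would build the geodesic inductively. Start with $P_0 = \{e\}$; its pointwise stabilizer is $H_0 = G_e$. Given a geodesic $P_k$ (with $e$ as one endpoint edge) with pointwise stabilizer $H_k$, if $H_k = 1$ stop; otherwise, since $H_k\neq 1$ and the action is faithful, the subtree $T^{H_k}$ of points fixed by $H_k$ is a proper subtree containing $P_k$. Because $T$ is connected and $T^{H_k}\neq T$, there is a vertex $u^\ast\in T^{H_k}$ adjacent to some $u^{\ast\ast}\notin T^{H_k}$. From an endpoint vertex of $P_k$ I walk inside $T^{H_k}$ to $u^\ast$ (this walk contributes edges whose stabilizers contain $H_k$, so $H_k$ is unchanged along the way), and then append the edge $u^\ast u^{\ast\ast}$; since $u^{\ast\ast}\notin T^{H_k}$, some element of $H_k$ fails to fix this edge, so the new stabilizer $H_{k+1}$ is strictly contained in $H_k$. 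The hypothesis $G_e\subsetneq G_v,G_w$ guarantees every vertex of $T$ has degree at least two, so at every step a non-backtracking extension is available. Since $H_k\leq G_e$ is finite, the strictly descending chain $H_0\supsetneq H_1\supsetneq\cdots$ terminates at $H_n = 1 = X$ after finitely many steps; the endpoint edge of $P_n$ opposite $e$ equals $se$ for some $s\in G$ by edge-transitivity, and this $s$ is the one sought.

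The main obstacle is ensuring the walk from the endpoint of $P_k$ to a boundary vertex $u^\ast$ of $T^{H_k}$ can be arranged as a forward (non-backtracking) extension of $P_k$, rather than starting by backtracking along $P_k$. If the unique geodesic from the current endpoint of $P_k$ to every accessible boundary vertex of $T^{H_k}$ begins by retracing $P_k$, I would try extending at the other endpoint of $P_k$ instead; if that fails too, the subtree $T^{H_k}$ must branch off $P_k$ at some interior vertex $v_i$, in which case I would replace $P_k$ by a new geodesic passing through $v_i$ as an endpoint (using the degree-$\geq 2$ condition), which then admits a forward extension into the boundary. Handling this case carefully — showing that one of these moves is always available as long as $H_k\neq 1$ — is where the geometric content of the hypothesis on $G_e\subsetneq G_v,G_w$ really enters, and it is the only genuinely subtle part of the argument.
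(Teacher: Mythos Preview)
Your reduction to the faithful case $X=1$ is fine, and the descending-chain idea is natural. But the obstacle you isolate is real, and your proposed handling of it does not close the gap. Suppose both half-trees beyond the endpoints of $P_k$ lie inside $T^{H_k}$, so that the only edges leaving $T^{H_k}$ hang off an interior vertex $v_i$ of $P_k$. Your replacement ``new geodesic passing through $v_i$ as an endpoint'' presumably means something like $[a,v_i]$ followed by a path out to a boundary edge $f$. But the pointwise stabilizer of $[a,v_i]$ may strictly contain $H_k$ (you have discarded the edges $e_{i+1},\ldots,e_m$ that were cutting the stabilizer down to $H_k$), and intersecting it with $G_f$ yields a subgroup of $G_e$ that need not be contained in $H_k$ at all, let alone properly. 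So you no longer have a strictly descending chain, and the termination argument breaks. You recognize that this is ``the only genuinely subtle part,'' but the sketch does not supply the missing idea.

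The paper's route sidesteps this difficulty by replacing the inductive construction with a global minimality argument plus one extra geometric device. One chooses at the outset a finite geodesic $p$ whose pointwise stabilizer $H$ has \emph{smallest cardinality} among all finite geodesics in $T$; since every such stabilizer sits inside the finite group $G_e$, a minimum exists. After translating and possibly extending by one edge (this is exactly where $G_e\subsetneq G_v,G_w$ is used), one arranges that $p$ begins with $e$ and ends with a coherently oriented translate $se$, so that $s$ is hyperbolic with $p$ a fundamental domain on its axis $T_s$. Minimality then forces $H$ to fix all of $T_s$, whence $sHs^{-1}=H$. This conjugation invariance is the key trick: given any edge $ge$ off $T_s$, some power $s^n$ carries it to an edge hanging off $T_s$ strictly beyond $p$, so that $p$ extends \emph{forward} along $T_s$ and then out to $s^nge$ without any backtracking. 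Minimality forces $H$ to fix this extension, hence $s^nge$, hence (by $s$-invariance of $H$) $ge$ itself. Thus $H$ fixes every edge and $H=X$. The translation-along-the-axis manoeuvre is precisely what reaches the edges that, from your inductive standpoint, would only be accessible by branching off the interior of $P_k$.
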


\begin{proof}
Since the pointwise stabilizer of a geodesic is the
intersection of the stabilizers of all its edges (and hence has size bounded by
$|G_e|$), it follows that there is a non-empty finite geodesic path $p$ whose
pointwise stabilizer $H$ has smallest cardinality  amongst all geodesics in $T$.
Without loss of generality, we may assume that $p$ starts with either $e$ or $e^{-1}$ (else
translate it to do so). Moreover, if $p$ starts with $e^{-1}$ we
can replace $e$ by $e^{-1}$ and hence we may assume that $p$ starts with $e$.
Let $(se)^{\varepsilon}$ with $s\in G$ and $\varepsilon=\pm 1$ be the last edge traversed by
$p$.  We claim that we can choose $p$ so that $\varepsilon=1$.
If this is not the case, i.e., the last vertex of $p$
is $sv$, then we can choose $g\in G_{sv}\setminus G_{se}$
and consider the path $q=p(gse)$.  We then have the situation in
Figure~\ref{Fig:wrongway}.
\begin{figure}[tbhp]
\begin{center}
\setlength{\unitlength}{2mm}
\begin{picture}(47,10)
 \gasset{Nframe=n,Nfill=n,Nadjust=wh,Nadjustdist=.5,AHnb=1,ELdist=.5}
 \node(v)(0,5){$v$}
 \node(w)(10,5){$w$}
 \node(sw)(30,5){$sw$}
 \node(sv)(40,5){$sv$}
 \node(gsw)(47,0){$gsw$}
 \drawedge(v,w){$e$}
 \drawedge(sv,sw){$se$}
 \drawedge(sv,gsw){$gse$}
 \drawbpedge[dash={.5}0](w,-30,10,sw,150,10){$[w,sw]$}
 \end{picture}
\end{center}
\caption{When $sv$ is the last vertex of $p$\label{Fig:wrongway}.}
\end{figure}
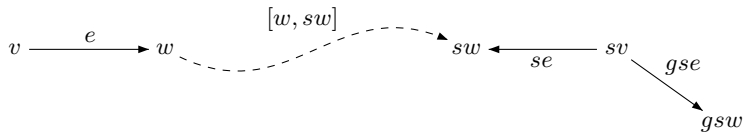
Any element of $G$ that fixes $q$ must then fix $p$ and hence the pointwise
stabilizer of $q$ coincides with $H$ by minimality.  Thus replacing $s$ by
$gs$, we are in the desired situation.   In particular, it follows that
we may take $s$ to be a hyperbolic element with
$[v,sv]$ a fundamental domain for the action of $s$ on its axis $T_s$, as the edges $e,se$ are coherent in the sense of~\cite[p.~62]{Ser80}.
Since every element that stabilizes $e$ and $se$ stabilizes $p$,
we have $H = G_e\cap sG_es\inv$. We will show that
$H=X$, where $X$ is the largest finite normal subgroup of $G$.
Recall from Lemma~\ref{uniquemaximalfinite} that $X$ is the kernel
of the action of $G$ on $T$. Hence, $X\subseteq H$. It remains to
show that $H \subseteq X$, i.e., that $H$ stabilizes every edge of $T$.

First we show that $H$ is the pointwise stabilizer of $T_s$.
Clearly, the pointwise stabilizer of $T_s$ is contained in $H$, since $p$ is contained in $T_s$.
If the pointwise stabilizer of $T_s$ is not equal to $H$, then some edge $f$
of $T_s$ is not stabilized by some element of $H$.  By extending $p$ to a reduced path in the line $T_s$
containing $f$, we obtain a reduced path with a smaller pointwise stabilizer
than $p$, a contradiction.

Next, observe that $H$ is closed
under conjugation by all powers of $s$ since $T_s$ is $\langle
s\rangle$-invariant and $H$ is the pointwise stabilizer of $T_s$.
Suppose now that $ge$ is an edge of $T\setminus T_s$ with $g\in G$. Let $r$ be the
geodesic path from $ge$ to $T_s$.  Clearly, $H$ stabilizes $ge$ if and only if
it stabilizes $s^nge$ for some $n$ since $s^nHs^{-n}=H$. Moreover, the
geodesic path from $s^nge$ to $T_s$ is $s^n r$. Thus without loss of
generality, we may assume that $r$ ends at a vertex $u$ of $s^m[v,sv]$ with
$m>2$.  Then  $p[sw,u]r\inv (ge)^{\varepsilon}$  is a reduced path
(see Figure~\ref{Fig:path-p}) containing $p$ as an initial segment, for an appropriate choice of $\varepsilon =\pm 1$,
and hence has pointwise stabilizer contained in $H$.  But by choice of $p$,
the pointwise stabilizer of $p[sw,u]r\inv (ge)^{\varepsilon}$
cannot be properly contained in $H$. Thus $H$ stabilizes $ge$,
i.e, $H$ stabilizes every edge of $T$ and hence $H=X$. This completes the proof.
\begin{figure}[tbhp]
\begin{center}
\setlength{\unitlength}{1.8mm}
\begin{picture}(62,24)(0,-8)
 \gasset{Nframe=n,Nfill=n,Nadjust=wh,Nadjustdist=.4,ELdist=.5,linewidth=0.12,AHnb=0}
 \node(c)(0,-5){}
 \node(v)(10,-5){$v$}
 \node(w)(18,-5){$w$}
 \drawedge(c,v){}
 \drawedge[AHnb=1](v,w){$e$}
 \node(sv)(30,-5){$sv$}
 \node(sw)(38,-5){$sw$}
 \drawedge[AHnb=1](sv,sw){$se$}
 \drawedge(w,sv){}
 \node(u)(50,-5){$u$}
 \drawedge(sw,u){}
 \node(d)(62,-5){}
 \drawedge(u,d){$T_s$}
 \node(a)(50,13){}
 \node[Nadjustdist=0](b)(50,5){}
 \drawedge[AHnb=1](b,a){$(ge)^{\varepsilon}$}
 \drawbpedge[AHnb=1,dash={.2}0](u,130,5,b,-50,5){$r^{-1}$}
 \put(10,-3){$\overbrace{\hspace{50.4mm}}^p$}
 \end{picture}
\end{center}
\caption{The path $p[sw,u]r\inv (ge)^{\varepsilon}$ \label{Fig:path-p}.}
\end{figure}
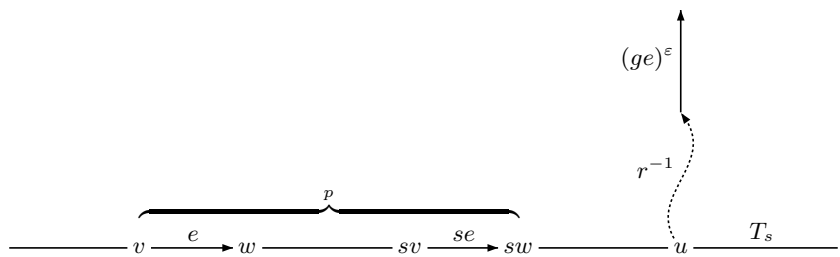
\qed
\end{proof}

Our next proposition uses the element $s$ constructed above to create subgroups of a special form.

\begin{proposition}\label{treeactionresult}
Let $G$ be a group acting on a tree $T$.   Let $v\xrightarrow{\, e\,}w$
be an edge of $T$ and put $N=\bigcap_{g\in G} gG_eg\inv$.  Suppose that:
\begin{enumerate}[(a)]
\item $[G_v:G_e]\geq 3$;
\item $[G_w:G_e]\geq 2$;
\item There is an element $s\in G$ with $sG_es\inv \cap G_e=N$.
\end{enumerate}
Then $G$ contains a subgroup isomorphic to an amalgamated free product of the
form $G_v\ast _{N} (N\rtimes \mathbb Z)$.  Moreover, if $G_e$ is finite, then
$G$ contains a subgroup isomorphic
to $G_v\ast _{N} (N\times \mathbb Z)\cong \bigast_{1_{N}} G_v$, where $1_N$ is the identity map on $N$.
\end{proposition}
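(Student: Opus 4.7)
The plan is to define a natural homomorphism from the proposed amalgamated product into $G$ and prove it injective via a normal-form / ping-pong argument on the tree $T$. Since $N=\bigcap_{g\in G}gG_eg\inv$ is normal in $G$, conjugation by $s$ restricts to an automorphism $\sigma\in\operatorname{Aut}(N)$, so the semidirect product $N\rtimes_{\sigma}\mathbb{Z}$ is defined. Using $N\subseteq G_e\subseteq G_v$, I would form $P=G_v\ast_{N}(N\rtimes_{\sigma}\mathbb{Z})$ and define $\phi\colon P\to G$ by inclusion on $G_v$ and by sending the generator $t$ of $\mathbb{Z}$ to $s$. The two restrictions agree on $N$, so $\phi$ is a well-defined homomorphism, and the proposition reduces to showing $\phi$ is injective.

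For injectivity I would invoke the normal form theorem for amalgamated products: every nontrivial element of $P$ admits an alternating representation using chosen coset representatives of $G_v/N$ and of $(N\rtimes\mathbb{Z})/N\cong\mathbb{Z}$ (the latter represented by powers $t^{k}$ with $k\neq 0$). I would verify the image of such a representation is nontrivial in $G$ by tracking its action on the edge $e$ in $T$. Condition (c) says the common pointwise stabilizer of $e$ and $se$ is exactly $N$, and combined with (a),(b) one can force $s$ to be hyperbolic on $T$: an elliptic $s$ can be ruled out by extending the geodesic containing $e$ and $se$ to one with strictly smaller pointwise stabilizer, in the spirit of Lemma~\ref{constructsequence}. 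Once $s$ is hyperbolic with axis $T_s$, each $s^{k_j}$-factor of the normal form translates the tracked edge along $T_s$, and each $G_v$-factor in a nontrivial coset moves $e$ among the $[G_v:G_e]\geq 3$ edges of its $G_v$-orbit at $v$ (with an analogous picture at $w$ via (b)). The branching $\geq 3$ (rather than $\geq 2$) provides the room to choose coset representatives so that alternating factors never return the tracked edge to its starting position, hence the image fails to fix $e$ and is nontrivial.

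For the moreover part, suppose $G_e$ is finite. Then $N$ is finite, so $\operatorname{Aut}(N)$ is finite and $\sigma^m=\operatorname{id}_N$ for some $m\geq 1$. By a standard Bass--Serre analysis (restricting the $\mathbb{Z}$-factor to $m\mathbb{Z}$), the subgroup $P'=\langle G_v,t^m\rangle\leq P$ is itself the amalgamated product $G_v\ast_{N}(N\rtimes_{\sigma^m}\mathbb{Z})=G_v\ast_{N}(N\times\mathbb{Z})$. Since $t^m$ commutes with $N$, this is presented exactly as the HNN extension $\bigast_{1_N}G_v$ of $G_v$ along the identity on $N$. Restricting $\phi$ to $P'$ (still injective) yields the desired embedding into $G$.

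The chief obstacle is the injectivity step: while the overall framework (normal forms plus ping-pong on $T$) is classical, one must use conditions (a) and (b) carefully to pick coset representatives coherently and must first rule out $s$ being elliptic, the latter requiring (a)--(c) to combine in the style of Lemma~\ref{constructsequence}.
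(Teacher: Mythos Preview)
Your framework (define $\phi\colon G_v\ast_N(N\rtimes\mathbb Z)\to G$ by $t\mapsto s$ and verify injectivity by a ping-pong on $T$) is natural, but it has a genuine gap at the injectivity step, and this gap is exactly what the paper's more elaborate construction is designed to close.

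The problem is that the amalgamated subgroup is $N$, not $G_e$. Your claim that ``each $G_v$-factor in a nontrivial coset moves $e$ among the $[G_v:G_e]\geq 3$ edges'' conflates cosets of $N$ with cosets of $G_e$: any $g\in G_e\setminus N$ lies in $G_v\setminus N$ yet fixes $e$, so tracking $e$ cannot detect such factors. The correct observation (which the paper makes) is that an element of $G_v$ fixing both $e$ and $se$ must lie in $G_e\cap sG_es^{-1}=N$; hence one should use $se$, not $e$, as the ping-pong ``net'' and take $X_1,X_2$ to be the components of $T\setminus\{se\}$. This does make every $g\in G_v\setminus N$ send $X_1$ into $X_2$. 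But now the other half of the ping-pong fails for your element $s$: once you arrange (as you must) that $e$ and $se$ both lie on the axis of a hyperbolic element built from $s$, that axis passes \emph{through} the net $se$, so powers of it do not send $X_2$ into $X_1$. Concretely, in $G=\mathbb Z/3\ast\mathbb Z/2$ with $G_e=\{1\}$, every $s$ satisfies (c), but taking $s$ to be the order-two generator makes your $\phi$ visibly non-injective (its image is all of $G$, which is not $\mathbb Z/3\ast\mathbb Z$).

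The paper resolves this by \emph{not} using $s$ as the $\mathbb Z$-generator at all. After arranging $e,se\subseteq[v,sv]$ it first produces a hyperbolic $x=sg$ with axis $T_x\supseteq[v,sv]$, then---and this is where $[G_v:G_e]\geq 3$ is really used---picks $h\in G_v\setminus G_e$ with $he\notin T_x$, so that the conjugate axis $T_y=hT_x$ branches off $T_x$. A further conjugation by a high power of $x$ pushes this branch entirely into $X_1$, yielding a hyperbolic $u$ whose axis $T_u$ lies wholly on one side of $se$. With $H_1=\langle N,u\rangle$ the ping-pong then goes through cleanly. Your outline never produces such an element, and the vague appeal to ``extending the geodesic in the spirit of Lemma~\ref{constructsequence}'' does not supply it; that lemma assumes finite edge stabilizers and in any case addresses hyperbolicity of $s$, not the positioning of the axis relative to $se$. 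The ``moreover'' paragraph you wrote is fine once a correct $u$ is in hand.
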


\begin{proof}
We begin by observing that $N$ consists of those element of $G$ that stabilize all translates of $e$.
Our third hypothesis (c) then says that there is a translate $se$ so that the
subgroup of elements fixing both $e$ and $se$ is exactly $N$.
Our initial goal is to show that we may assume that the geodesic $[v,sv]$
from $v$ to $sv$ contains both $e$ and $se$.  First suppose the geodesic contains exactly
one of these edges.  Replacing $e$ by $se$ and $s$ by $s\inv$ if necessary,
we may assume without loss of generality that it contains $e$.
Since $G_{sw}\supsetneq G_{se}$ by (b), we can choose $g\in G_{sw}\setminus G_{se}$.
Then we have the situation in Figure~\ref{Fig:point1}.
\begin{figure}[tbhp]
\begin{center}
\setlength{\unitlength}{2mm}
\begin{picture}(47,10)
 \gasset{Nframe=n,Nfill=n,Nadjust=wh,Nadjustdist=.5,AHnb=1,ELdist=.5}
 \node(v)(0,5){$v$}
 \node(w)(10,5){$w$}
 \node(sv)(30,5){$sv$}
 \node(sw)(40,5){$sw$}
 \node(gsv)(47,0){$gsv$}
 \drawedge(v,w){$e$}
 \drawedge(sv,sw){$se$}
 \drawedge(gsv,sw){$gse$}
 \drawbpedge[dash={.5}0](w,-30,10,sv,150,10){$[w,sv]$}
 \end{picture}
\end{center}
\caption{When only $e$ is in $[v,sv]$\label{Fig:point1}.}
\end{figure}
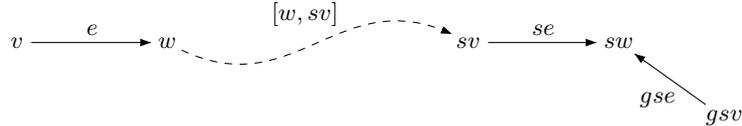
Any element of $G$ that fixes $e$ and $gse$ must then fix $e$ and $se$
and so $gsG_es\inv g\inv\cap G_e=N$.  Moreover, $e$ and $gse$ belong to $[v,gsv]$.
Thus replacing $s$ by $gs$, we are in the desired situation.

Next suppose that neither $e$ nor $se$ belong to
$[v,sv]$.  Choose $g\in G_{sw}\setminus G_{se}$.
Then we have the picture in Figure~\ref{Fig:point2}.
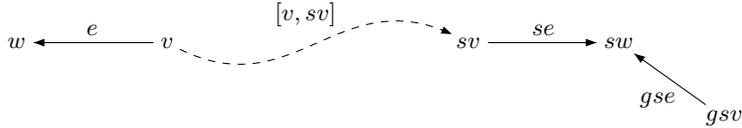
\begin{figure}[tbhp]
\begin{center}
\setlength{\unitlength}{2mm}
\begin{picture}(47,10)
 \gasset{Nframe=n,Nfill=n,Nadjust=wh,Nadjustdist=.5,AHnb=1,ELdist=.5}
 \node(w)(0,5){$w$}
 \node(v)(10,5){$v$}
 \node(sv)(30,5){$sv$}
 \node(sw)(40,5){$sw$}
 \node(gsv)(47,0){$gsv$}
 \drawedge[ELside=r](v,w){$e$}
 \drawedge(sv,sw){$se$}
 \drawedge(gsv,sw){$gse$}
 \drawbpedge[dash={.5}0](v,-30,10,sv,150,10){$[v,sv]$}
 \end{picture}
\end{center}
\caption{When neither $e$ nor $se$ is in $[v,sv]$\label{Fig:point2}.}
\end{figure}
Again any element of $G$ which fixes $e$ and $gse$ must fix also $se$ and hence
belong to $N$.  Thus  $gsG_es\inv g\inv\cap G_e=N$.
Moreover, $gse$ is an edge of $[v,gsv]$ and so replacing $s$
by $gs$ leads us to the previous case where exactly one of the edges is on the geodesic.

Thus we may now assume that $e$ and $se$ belong to $[v,sv]$.
Choose $g\in G_v\setminus G_e$.  Then we have the situation in
Figure~\ref{Fig:axisx}. The edges $e$ and $sge$ are coherent in the
sense of~\cite[p.~62]{Ser80}.
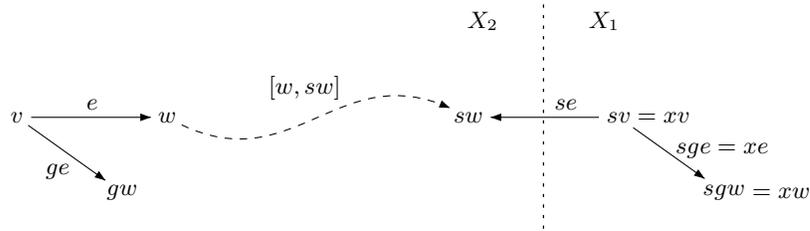
\begin{figure}[tbhp]
\begin{center}
\setlength{\unitlength}{2mm}
\begin{picture}(47,16)(0,-3)
 \gasset{Nframe=n,Nfill=n,Nadjust=wh,Nadjustdist=.5,AHnb=1,ELdist=.5}
 \node(v)(0,5){$v$}
 \node(w)(10,5){$w$}
 \node(sw)(30,5){$sw$}
 \node(sv)(40,5){$sv$}
 \node(sv')(43,5){$=xv$}
 \node(gw)(7,0){$gw$}
 \node(sgw)(47,0){$sgw$}
 \node(sgw')(50.8,0){$=xw$}
 \drawedge(v,w){$e$}
 \drawedge[ELpos=35,ELside=r](sv,sw){$se$}
 \drawedge[ELside=r](v,gw){$ge$}
 \drawedge[ELdist=0,ELpos=80](sv,sgw){$sge=xe$}
 \drawbpedge[dash={.5}0](w,-30,10,sw,150,10){$[w,sw]$}
 \node(a)(35,13){}
 \node(b)(35,-3){}
 \drawedge[ELdist=3,AHnb=0,ELpos=10,dash={0.2 0.5}0](a,b){$X_1$}
 \drawedge[ELdist=3,AHnb=0,ELpos=10,ELside=r,dash={0.2 0.5}0](a,b){$X_2$}
 \end{picture}
\end{center}
\caption{A part of the axis of $x=sg$\label{Fig:axisx}}
\end{figure}
Thus $x=sg$ is a hyperbolic element with axis $T_{x}=\langle x\rangle [v,sv]$
and $[v,sv]$ is a fundamental domain for the action of $x$ on $T_x$ (cf.~\cite[Sec.~I.6]{Ser80}).
Let $X_1$ be the connected component of $T\setminus \{se\}$
containing $sv$ and let $X_2$ be the connected component
of $T\setminus \{se\}$ containing $sw$.

Since $[G_v:G_e]\geq 3$ by (a) and $T_x$ contains only one other (geometric) edge incident on $v$
besides $e$, we can find $h\in G_v\setminus G_e$ so that $he\notin T_x$.  Then
$y=hxh\inv$ is a hyperbolic element with axis $T_y=hT_x$.  The axes $T_y$ and
$T_x$ intersect at the vertex $v=hv$, but do not coincide since $he\in
T_y\setminus T_x$.  Thus $T_y\cap T_x$ is either a point, a segment or an
infinite ray. Notice that $X_1\cap T_x$ and $X_2\cap T_x$ each contain exactly
one of the two ends of $T_x$.   Interchanging $e$ with $se$ and $s$ with
$s\inv$ in the statement of the proposition if necessary, we may assume that
$T_y\cap T_x$ does not contain the end of $T_x$ determined by $X_2\cap T_x$.
Then, since $x$ acts on $T_x$ by translations in the direction $v$ to $sv$, by
choosing $n>0$ large enough, we can guarantee that $x^nT_y\cap T_x$ is
contained in $X_1$ and does not contain the vertex $sv$.  Putting $u=x^nyx^{-n}$ yields a hyperbolic element $u\in
G$ with axis $T_u=x^nT_y$ such that $T_u\cap T_x\neq \emptyset$
is contained in $X_1$ and does not contain $sv$. The picture is either as in Figure~\ref{Fig:lastpic} or~\ref{Fig:lastpic2}.
\begin{figure}[tbhp]
\begin{center}
\setlength{\unitlength}{2mm}
\begin{picture}(54.5,20)(0,-5)
 \gasset{Nframe=n,Nfill=n,Nadjust=wh,Nadjustdist=.5,AHnb=1,ELdist=.5,linewidth=.1}
 \node(v)(0,5){$v$}
 \node(w)(8,5){$w$}
 \node(sw)(28,5){$sw$}
 \node(sv)(36,5){$sv=xv$}
 \drawedge(v,w){$e$}
 \drawedge[ELpos=65,ELside=r](sv,sw){$se$}
 \drawbpedge[dash={.5}0](w,-30,10,sw,140,10){$[w,sw]$}
 \gasset{Nfill=y,Nframe=y,Nadjust=wh,Nadjustdist=0,AHnb=0}
 \node(a)(32,15){}
 \node(b)(32,-5){}
 \drawedge[ELdist=3,ELpos=10,dash={0.2 0.5}0](a,b){$X_1$}
 \drawedge[ELdist=3,ELpos=10,ELside=r,dash={0.2 0.5}0](a,b){$X_2$}
 \drawcbezier(37.5,4,40,.07,42,.07,44,.07)
 \drawcbezier(37.5, -4, 40, -.07, 42, -.07, 44, -.07)
 \drawline(44,.07)(48,.07)
 \drawline(44,-.07)(48,-.07)
 \drawcbezier(48,.07, 50,.07,52,.07, 54.5,4)
 \drawcbezier(48, -.07, 50, -.07, 52, -.07, 54.5, -4)
 \put(40,1.7){$T_x$}\put(40,-2.5){$T_u$}
 \end{picture}
\end{center}
\caption{The axes $T_x$ and $T_u$ have bounded intersection\label{Fig:lastpic}.}
\end{figure}
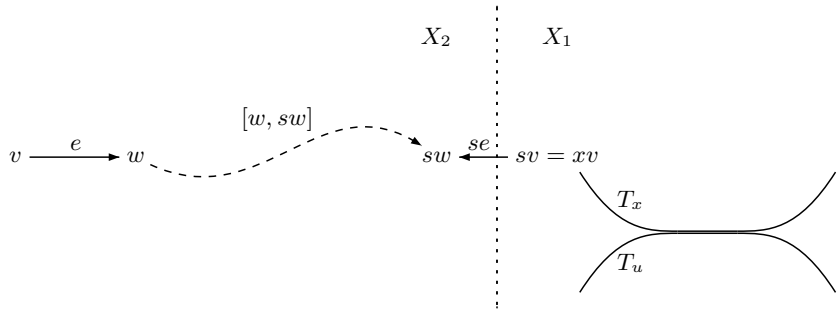
\begin{figure}[tbhp]
\begin{center}
\setlength{\unitlength}{2mm}
\begin{picture}(52,20)(0,-5)
 \gasset{Nframe=n,Nfill=n,Nadjust=wh,Nadjustdist=.5,AHnb=1,ELdist=.5,linewidth=.1}
 \node(v)(0,5){$v$}
 \node(w)(8,5){$w$}
 \node(sw)(28,5){$sw$}
 \node(sv)(36,5){$sv=xv$}
 \drawedge(v,w){$e$}
 \drawedge[ELpos=65,ELside=r](sv,sw){$se$}
 \drawbpedge[dash={.5}0](w,-30,10,sw,140,10){$[w,sw]$}
 \gasset{Nfill=y,Nframe=y,Nadjust=wh,Nadjustdist=0,AHnb=0}
 \node(a)(32,15){}
 \node(b)(32,-5){}
 \drawedge[ELdist=3,ELpos=10,dash={0.2 0.5}0](a,b){$X_1$}
 \drawedge[ELdist=3,ELpos=10,ELside=r,dash={0.2 0.5}0](a,b){$X_2$}
 \drawcbezier(37.5,4,40,.07,42,.07,44,.07)
 \drawcbezier(37.5, -4, 40, -.07, 42, -.07, 44, -.07)
 \drawline(44,.07)(52,.07)
 \drawline(44,-.07)(52,-.07)
 \put(40,1.7){$T_x$}\put(40,-2.5){$T_u$}
 \end{picture}
\end{center}
\caption{The axes $T_x$ and $T_u$ have unbounded intersection\label{Fig:lastpic2}.}
\end{figure}
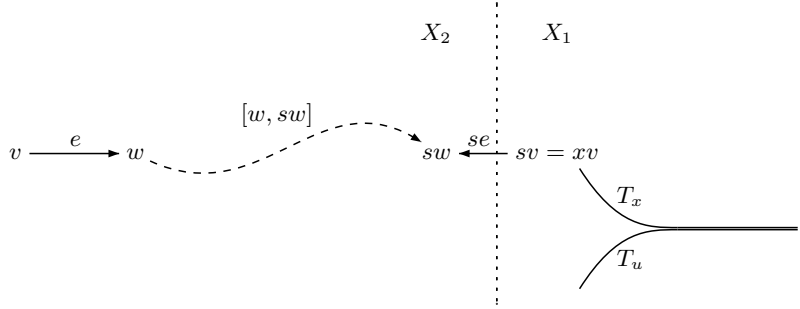

Let $H_1=\langle N,u\rangle$ and $H_2 = G_v$.  Since $u$ is hyperbolic and
elements of $N$ are elliptic, it follows that $\langle u\rangle\cap N=\{1\}$.
Consequently,  $H_1\cong N\rtimes \mathbb Z$ via the conjugation action of
$u$ on $N$.  Also, if $N$ is finite, then we can find a power $n>0$ so that
$u^n$ acts trivially on $N$ by conjugation.  Then $u^n$ is a hyperbolic
element with the same axis as $u$, but a larger translation length.
Replacing $u$ by $u^n$, we may then assume that $u$ commutes elementwise with
$N$ and so $H_1\cong N\times \mathbb Z$.  Also note that $H_1\cap
H_2=N$.  Indeed, if $h\in H_1$, we can write $h=u^na$ with $a\in N$.  Then
since $a\in G_v$, we have $h\in G_v$ if and only if $u^n\in G_v$, if and only
if $n=0$ since $u$ is hyperbolic.

We claim that $H_1$ and $H_2$ generate a subgroup isomorphic to $H_1 \ast_N
H_2$.  To prove this, we use the so-called Ping Pong Lemma~\cite{LySch77}.  The Ping Pong
Lemma has the following setup:   a group $G$ acting on a set $X$, subgroups
$H_1$ and $H_2$ of $G$ with $[H_1:N]\geq 3, [H_2:N]\geq 2$, where $N=H_1\cap H_2$, and non-empty subsets
$X_1,X_2\subseteq X$ with
$X_2\nsubseteq X_1$ such that:
\begin{itemize}
\item $h_1X_2\subseteq X_1$ for all $h_1\in H_1\setminus N$;
\item $h_2X_1\subseteq X_2$ for all $h_2\in H_2\setminus N$.
\end{itemize}
The conclusion of the lemma is that the subgroup of $G$ generated by $H_1$ and
$H_2$ is isomorphic to the amalgamated free product $H_1\ast_N H_2$.  We apply
the Ping Pong Lemma to our subgroups $H_1,H_2$.  We take $X$ to be $T$,
whereas $X_1$ and $X_2$ have already been defined above.  One should think of
the edge $se$ as the net of the ping pong table $X$.
Note that in our case $[H_1:N]=\infty$, whereas $[H_2:N]\geq [G_v:G_e]\geq 3$.

Let $k=u^ma\in H_1\setminus N$ with $a\in N$. Then $a$ fixes $[v,sv]$ because
it fixes $e$ and $se$. Let $q$ be the geodesic from
$sv$ to $T_u \cap T_x$; it is non-empty by choice of $u$.
As any non-trivial element $u^m$ of $\langle u\rangle$ acts
as a translation on the axis $T_u$, it follows that the geodesic
$p=[ksv,sv]=[u^msv,sv]$ must pass through a vertex of $T_u\cap T_x$ as per
Figure~\ref{Fig:geodesic}.
\begin{figure}[tbhp]
\begin{center}
\setlength{\unitlength}{2mm}
\begin{picture}(28,20)(24,-5)
 \gasset{Nframe=n,Nfill=n,Nadjust=wh,Nadjustdist=.4}
 \node(sw)(24,5){$sw$}
 \node(sv)(36,5){$sv$}
 \node(hsv)(46,5){$ksv$}
 \node(sgw')(50.2,5){$=u^msv$}
 \gasset{Nfill=y,Nframe=y,Nadjust=wh,Nadjustdist=0,AHnb=0,ELdist=.5,linewidth=.1}
 \node(a)(31,15){}
 \node(b)(31,-6){}
 \drawedge[ELdist=3,ELpos=10,dash={0.2 0.5}0](a,b){$X_1$}
 \drawedge[ELdist=3,ELpos=10,ELside=r,dash={0.2 0.5}0](a,b){$X_2$}
 \drawedge[ELpos=65,ELside=r,AHnb=1](sv,sw){$se$}
 \node(c)(33,-5){}
 \node(d)(52,-5){}
 \drawedge[ELside=r](c,d){$T_u$}
 \node(e)(36,-5){}
 \node(f)(46,-5){}
 \drawbpedge[dash={.2}0](sv,-50,5,e,130,5){$q$}
 \drawbpedge[dash={.2}0](hsv,-50,5,f,130,5){$u^mq$}
 \end{picture}
\end{center}
\caption{The geodesic $p=[ksv,sv] = [u^msv,sv]$\label{Fig:geodesic}}
\end{figure}
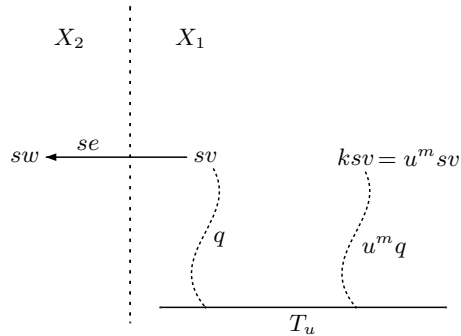
Since $T_u\cap T_x$ is contained in $X_1$, it follows that $p$ is contained in
$X_1$.  Let $v_0\in X_2$.  We claim that
$k[v_0,sv]\cup [ksv,sv] = [kv_0,ksv]\cup p$ is the geodesic from $kv_0$ to $sv$,
which implies that this path (and hence $kv_0$) is contained in $X_1$
since $p$ is contained in $X_1$.
To show that $k[v_0,sv]\cup [ksv,sv]$ is a geodesic, note that
$k[v_0,sv]$ ends with the edge $(kse)^{-1}$.
Hence, it suffices to show that $p$ does not begin with the edge
$kse$. The geodesic $p = [u^msv,sv]$ begins with
the path $u^mq$, see Figure~\ref{Fig:geodesic}.
Since $q \subseteq X_1$ does not contain $se$, $u^mq$ does not contain $kse=u^mse$.
We have thus shown $kX_2\subseteq X_1$.

On the other hand, if $k\in H_2\setminus N$, then $k$ stabilizes $v$ but not
$se$ (since if it stabilizes $v$ and $se$, then it must also stabilize $e$ and
so belong to $N$ by assumption on $s$). Since $kse\neq se$ and $kv=v$, there
is a last vertex $v_0$ on the geodesic $[v,sv]$ fixed by $k$  and $v_0\neq
sv$.  Let $v_1\in X_1$.  Then by definition of $X_1$, it follows that
$[v_1,v_0]=[v_1,sv]\cup [sv,v_0]$ and hence
$[kv_1,v_0]=k[v_1,v_0]=k[v_1,sv]\cup k[sv,v_0]$.  Notice that
$k[sv,v_0]=[ksv,v_0]$ cannot contain any edge of $[v,sv]$ by choice of $v_0$.
Therefore, $k[v_1,sv]\cup k[sv,v_0]\cup [v_0,sv]=[kv_1,sv]$ (see Figure~\ref{Fig:2nd-inclusion}) and hence
$[kv_1,sv]$ contains the edge $se$.  It follows that $kv_1\in X_2$ and so
$kX_1\subseteq X_2$.  The Ping Pong Lemma now
yields that  $\langle H_1,H_2\rangle\cong H_1\ast_N H_2$, completing the proof.
\qed
\begin{figure}[tbhp]
\begin{center}
\setlength{\unitlength}{2mm}
\begin{picture}(52,28)(0,-5)
 \gasset{Nframe=n,Nfill=n,Nadjust=wh,Nadjustdist=.5,AHnb=1,ELdist=.5,linewidth=.1}
 \node(v)(0,-3){$v$}
 \node(sv)(36,-3){$sv$}
 \node(v0)(18,-3){$v_0$}
 \node(ksv)(18,15){$ksv$}
 \node(v1)(42,3){$v_1$}
 \node(kv1)(24,21){$kv_1$}
 \drawbpedge(v,-30,10,v0,140,10){}
 \drawbpedge(v0,-30,10,sv,140,10){}
 \drawbpedge(v0,60,10,ksv,240,10){}
 \drawbpedge(sv,15,5,v1,195,5){}
 \drawbpedge(ksv,15,5,kv1,195,5){}
 \gasset{Nfill=y,Nframe=y,Nadjust=wh,Nadjustdist=0,AHnb=0}
 \node(a)(32,22){}
 \node(b)(32,-5){}
 \drawedge[ELdist=3,ELpos=20,dash={0.2 0.5}0](a,b){$X_1$}
 \drawedge[ELdist=3,ELpos=20,ELside=r,dash={0.2 0.5}0](a,b){$X_2$}
 \end{picture}
\end{center}
\caption{The inclusion $kX_1\subseteq X_2$.\label{Fig:2nd-inclusion}}
\end{figure}
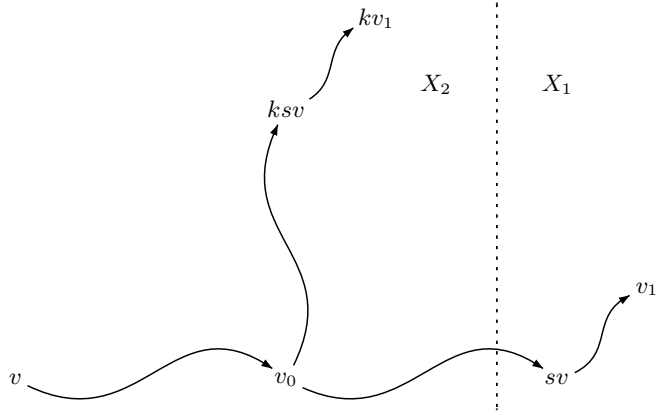
\end{proof}

The proposition admits the following corollary.

\begin{corollary}\label{reducetospecial}
Let $G$ be a finitely generated group splitting non-trivially over a finite subgroup $A$.
Suppose that $H$ is an infinite vertex group of the corresponding splitting.   Then $G$
has a subgroup isomorphic to the HNN extension
$\bigast_{1_X} H$ where $X$ is the largest finite normal subgroup of $G$ and
$1_X\colon X\to X$ is the identity mapping.
\end{corollary}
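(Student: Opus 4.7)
The plan is to apply Proposition~\ref{treeactionresult} to the action of $G$ on the Bass--Serre tree $T$ corresponding to the given splitting. On $T$ the group $G$ acts edge-transitively, without inversions, without global fixed points, and with every edge stabilizer conjugate to the finite group $A$. I would fix an edge $v\xrightarrow{\,e\,}w$ of $T$ with $G_v=H$; such a choice is possible because $H$ arises as a vertex stabilizer of the action.

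Next I would verify the three hypotheses of Proposition~\ref{treeactionresult}. Hypothesis (a) is immediate, since $G_v=H$ is infinite while $G_e$ is finite, so $[G_v:G_e]=\infty$. For (b), if the splitting is an HNN extension then $G_w$ is a conjugate of $H$ and is again infinite, while if it is an amalgamated free product, then non-triviality of the splitting forces $G_w\supsetneq G_e$; in either case $[G_w:G_e]\geq 2$. For (c) I would invoke Lemma~\ref{constructsequence}, whose hypotheses are furnished by (a), (b) and the finiteness of $G_e$; it produces an element $s\in G$ with $G_e\cap sG_es\inv=X$, the largest finite normal subgroup of $G$.

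It remains to identify the subgroup $N=\bigcap_{g\in G}gG_eg\inv$ appearing in Proposition~\ref{treeactionresult} with the group $X$. Since $G$ acts edge-transitively on $T$, every edge stabilizer has the form $gG_eg\inv$, so $N$ coincides with the kernel of the action of $G$ on $T$; by Lemma~\ref{uniquemaximalfinite} this kernel is precisely $X$. Thus the element $s$ from Lemma~\ref{constructsequence} verifies hypothesis (c) with $N=X$, and Proposition~\ref{treeactionresult} delivers a subgroup of $G$ isomorphic to $G_v\ast_N(N\rtimes\mathbb Z)$. Because $G_e$ is finite, so is $N$, and the finer conclusion of the proposition upgrades this to a subgroup isomorphic to $G_v\ast_N(N\times\mathbb Z)\cong\bigast_{1_X}H$, as desired. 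The only real subtlety is the identification $N=X$; the rest is routine bookkeeping once the earlier lemma and proposition are on the table.
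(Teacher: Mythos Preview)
Your proof is correct and follows essentially the same route as the paper's: set up the Bass--Serre tree, verify the hypotheses of Lemma~\ref{constructsequence} and Proposition~\ref{treeactionresult}, identify $N$ with $X$, and read off the conclusion. Your treatment is in fact slightly more explicit in places---you separate the HNN and amalgamated cases when checking $[G_w:G_e]\geq 2$, and you spell out the identification $N=X$ via Lemma~\ref{uniquemaximalfinite}---whereas the paper handles the former by observing that $G$ cannot be an ascending HNN extension (since $A$ is finite and $H$ infinite) and simply asserts the latter.
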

\begin{proof}
Let $T$ be the Bass-Serre tree associated to the splitting of $G$ over $A$.
It follows that there is an edge $v\xrightarrow{\, e\,}w$ of $T$ so
that $G_e=A$ and $G_v=H$.
Moreover, since the splitting is non-trivial and also
$G$ cannot be an ascending HNN extension of $H$ as $H$ is infinite and $A$ is finite, we must have $G_e \subsetneq G_w$.
The action of $G$ on $T$ is edge-transitively and has no global fixed point.
Lemma~\ref{constructsequence} provides $s\in S$ with \[s G_es\inv\cap G_e= sAs\inv \cap
A=X.\] Observing that $X=\bigcap_{g\in G}gG_eg\inv$, Proposition~\ref{treeactionresult} then allows us to conclude that $G$
contains a subgroup
isomorphic to $H\ast_X (X\times \mathbb Z)$, which is easily seen to be isomorphic to the desired HNN extension.
\qed
\end{proof}

The following technical result, which is the heart of this paper, will be proved in the next section.

\begin{lemma}\label{techlemma}
Let $G=\bigast_{1_X} H$ be an HNN extension with $X$ a finite proper normal
subgroup of $H$.  Then the rational subset membership problem for $H$ can be
reduced to the submonoid membership problem for $G$.  Moreover, if $H$ has a
fixed rational subset with undecidable membership problem, then $G$ has a
fixed finitely generated
submonoid with undecidable membership problem.
\end{lemma}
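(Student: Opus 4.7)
I would reduce the rational subset membership problem for $H$ to the submonoid membership problem for $G = \bigast_{1_X} H$ by encoding automaton states via powers of the stable letter $t$. Given an NFA $\mathscr A = (Q,\widetilde\Sigma,\delta,q_0,F)$ recognising $L \subseteq \widetilde\Sigma^*$, I may assume (by a standard construction) that $Q = \{0,\ldots,n\}$, $q_0 = 0$, and $F = \{f\}$. Let $M \leq G$ be the submonoid generated by the transition elements $t^{-p}\pi(a)t^{q}$ for $(p,a,q)\in\delta$, together with the acceptance element $t^{-f}$. The easy inclusion $\pi(L) \subseteq M\cap H$ follows by telescoping: an accepting run $0\xrightarrow{a_1}q_1\xrightarrow{a_2}\cdots\xrightarrow{a_k}f$ on $w=a_1\cdots a_k$ yields
\[
\pi(w)=\bigl(t^{0}\pi(a_1)t^{q_1}\bigr)\bigl(t^{-q_1}\pi(a_2)t^{q_2}\bigr)\cdots\bigl(t^{-q_{k-1}}\pi(a_k)t^{f}\bigr)\cdot t^{-f},
\]
which is an explicit product of generators of $M$. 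The test element is $\pi(w)$ itself, so the reduction is simply $\pi(w)\in\pi(L)\iff \pi(w)\in M$.

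The hard direction $M\cap H\subseteq\pi(L)$ is the technical heart of the lemma. Starting from a product $u_1\cdots u_N$ of generators that equals some $h\in H$, I would view the concatenation as a word in the alphabet $\widetilde\Sigma\cup\{t^{\pm 1}\}$ representing $h$ in $G$, and apply Britton's Lemma for the HNN extension $\bigast_{1_X} H$: the word must admit a cascade of pinches $t^{\varepsilon}x\,t^{-\varepsilon}\mapsto x$ with $x\in X$ eliminating all its $t$-content. In the clean case where no $X$-valued element ever appears between two $t$-syllables, no pinches are available, so Britton's Lemma forces each intermediate $t$-power $t^{q_i-p_{i+1}}$ to vanish by adjacent cancellation, which forces $q_i=p_{i+1}$ throughout and exhibits an accepting run whose label projects to $h$. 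The main obstacle is the non-clean case: because $t$ centralises $X$ (the HNN partial automorphism is $1_X$), pinches become available whenever a product of consecutive labels $\pi(a_i)\cdots\pi(a_j)$ lands in $X$, potentially producing elements of $M\cap H$ without any obvious corresponding accepting run. My approach here is to exploit both the \emph{finiteness} of $X$ (which bounds the bookkeeping of $X$-valued residues across pinches) and the \emph{properness} of $X\subsetneq H$ (which provides an auxiliary element $h_0\in H\setminus X$ for preprocessing $\mathscr A$), combined with an induction on the number of pinches used in the Britton reduction. The inductive step shows that each pinch absorbing a sub-product into $X$ can be matched to an automaton-level rewriting yielding an honest accepting run whose label still projects to $h$; carrying out this correspondence between HNN pinches and automaton computations is where the bulk of the combinatorial work lies.

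For the second statement: if $L_0\subseteq H$ is a fixed rational subset with undecidable membership problem, applying the construction above to a single fixed NFA recognising $L_0$ produces a single finitely generated submonoid $M_0\leq G$, and the reduction $\pi(w)\in\pi(L_0)\iff \pi(w)\in M_0$ is uniform in $w$, so any decision procedure for membership in $M_0$ would yield one for $\pi(L_0)$, transferring undecidability from $L_0$ to $M_0$.
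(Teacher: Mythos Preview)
Your encoding has a concrete flaw: including the free generator $t^{-f}$ lets accepted runs be concatenated, so $M\cap H$ contains (at least) the submonoid generated by $\pi(L)$, not just $\pi(L)$ itself. Take $H=\langle a\rangle\cong\mathbb Z$, $X=\{1\}$, and the two-state automaton with a single transition $0\xrightarrow{a}1$, so $\pi(L)=\{a\}$. Your $M$ is generated by $at$ and $t^{-1}$; then $a^2=(at)(t^{-1})(at)(t^{-1})\in M\cap H$ but $a^2\notin\pi(L)$. The paper avoids this by encoding states as $[q]=t^qkt^{-q}$ with a fixed $k\in N\setminus X$ and by testing whether the element $g=[q_0]\,h\,[q_f]^{-1}$ lies in the submonoid $\Delta^*$ generated by $\{[q]c[p]^{-1}:(q,c,p)\in\delta\}$. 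The bookends $[q_0]$ and $[q_f]^{-1}$ sit inside the test element rather than among the generators, so there is no way to splice two runs together; and the factor $k\notin X$ inside each $[q]$ is precisely what makes the reduced HNN form in their Claim~1 genuinely reduced.

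Even after repairing the encoding, your treatment of the ``non-clean'' case is where the real difficulty lies, and a bare induction on pinches does not obviously go through. In the paper's language, an $X$-cycle at a state $q$ contributes a central element of $X$ that you cannot simply discard: what one actually proves is only $g\in\Delta^*\iff h\in\pi\bigl(X(\mathscr A)^*L(\mathscr A)\bigr)$, where $X(\mathscr A)$ collects all such $X$-values. To close the gap between $\pi(X(\mathscr A)^*L(\mathscr A))$ and $\pi(L(\mathscr A))$ the paper first replaces $\mathscr A$ by a finite family of automata $\mathscr A_P$ (one for each admissible set $P$ of states) designed so that every accepting run visits \emph{every} state; then each $X$-cycle based at some state $q$ can be spliced into an accepting run at an occurrence of $q$, giving $\pi(X(\mathscr A_P)^*L(\mathscr A_P))=\pi(L(\mathscr A_P))$. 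Your outline mentions using an auxiliary $h_0\in H\setminus X$ to preprocess $\mathscr A$, but it is this visit-all-states trick, not merely properness of $X$, that makes the $X$-residues absorbable.
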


Let us now prove Theorem~\ref{newmain} assuming Lemma~\ref{techlemma}.

\subsubsection*{Proof of Theorem~\ref{newmain}}
For the first statement of Theorem~\ref{newmain}, the non-trivial direction is
to show that the rational subset membership problem for $G$ reduces to the
submonoid membership problem for $G$.   By Stallings' Ends Theorem~\cite{Stal68,Stal71} the group $G$ splits non-trivially over a finite subgroup $A$.
By the results of~\cite{KaSiSt06,LohSen08}, the rational subset
membership problem for $G$ reduces to the rational subset
membership problem for the vertex group(s) of the splitting.  Hence, it suffices to show that if $H$ is a vertex group of the splitting, then $H$ has
decidable rational subset membership problem. If $H$ is finite, there is
nothing to prove, so we may assume that $H$ is infinite.  Then by
Corollary~\ref{reducetospecial}, $G$ contains a subgroup isomorphic to
$\bigast_{1_X} H$. Clearly, also $\bigast_{1_X} H$ has a decidable submonoid
membership problem. The result now follows from Lemma~\ref{techlemma}.

The non-trivial direction of the second statement is to show that if $G$ has a
fixed rational subset with undecidable membership problem, then it has a fixed
finitely generated submonoid with undecidable membership problem.
In~\cite{LohSen08} the membership problem for a fixed rational subset of $G$
is reduced to the membership problem of a finite number of fixed rational
subsets of the vertex groups (in the splitting over $A$).  Hence one of the
vertex groups $H$ has a fixed rational subset with undecidable membership problem; necessarily $H$ is infinite.
Corollary~\ref{reducetospecial} then applies to
allow us to deduce that $G$ has a fixed subgroup of the form $\bigast_{1_X} H$
where $X$ is a finite proper normal subgroup of $H$ and $H$ has a fixed
rational subset with undecidable membership problem.  Thus by
Lemma~\ref{techlemma}, we conclude $G$ has a fixed finitely generated
submonoid with undecidable
membership problem.  This completes the proof.
\qed

\section{The Proof of Lemma~\ref{techlemma}} \label{Sec main lemma}

Let us fix an HNN-extension
$G = \langle H, t\mid  t^{-1} x t = x\ (x\in X) \rangle$,
where $X$ is a proper finite normal subgroup of the finitely generated group
$H$.  We assume furthermore
that $H$ is infinite.  The group $H$ acts on $X$ by conjugation and since $X$
is finite, the kernel $N$ of this action has finite index in $H$ (and hence
is finitely generated). In particular, $N$ is infinite and so $N\cap X$ is a proper subgroup of
$N$. Since decidability of rational subset membership is a virtual
property~\cite{Gru99}, to show that $H$ has decidable rational subset
membership, it suffices to prove that $N$ has decidable
rational subset membership.

Let $K=\langle N,t\rangle \leq G$.  Then $K$ centralizes $X$
and so in particular,  $K\cap X$ is contained in the
center of $K$.  Let us fix a finite group generating set $\Sigma$ for $N$
and denote by $\pi \colon  \widetilde{\Sigma}^* \to N$ the canonical projection.
Without loss of generality assume that $N \cap X \subseteq \widetilde{\Sigma}$.
Suppose that $\mathscr A = (Q, \widetilde{\Sigma}, \delta, q_0, F)$ is a finite automaton,
For a state $q \in Q$, let $X(q,\mathscr A)$ be the set of
all elements from $X\cap N$ represented by a word labeling a loop at state $q$. Note that
these elements form a submonoid of $X\cap N$ and therefore (since $X\cap N$ is finite)
a subgroup of $X\cap N$.
Let $X(\mathscr A) \subseteq \widetilde{\Sigma}$ be the set
$\bigcup_{q \in Q} X(q,\mathscr A)$.

The proof of the following lemma is quite similar to the proof of
Lemma~11 and Theorem~7 in \cite{LohSte08}.

\begin{lemma} \label{lemma main}
{}From a given finite automaton $\mathscr A$ over $\widetilde{\Sigma}$ and an element
$h \in N$ (given as a word over $\widetilde{\Sigma}$)
we can construct effectively  a finite subset $\Delta \subseteq K$
and an element $g \in K$ such that
$h \in \pi(X(\mathscr A)^*L(\mathscr A))$ if and only if $g \in \Delta^*$.
\end{lemma}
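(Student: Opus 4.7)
The plan is to encode accepting runs of $\mathscr A$ in $K$ by marking states with powers of the stable letter $t$.  First I normalize $\mathscr A$ to have a unique final state $q_F$ by the usual trick of adding a new sink with $\varepsilon$-transitions and then eliminating them.  Enumerate the states as $q_1,\dots,q_n$ with $q_1 = q_0$ and $q_n = q_F$.  Set
\[
\Delta_1 \;=\; \{\, t^{-i}\,\pi(a)\,t^{j}\in K \;:\; (q_i,a,q_j)\in\delta\,\}, \qquad \Delta_2 \;=\; X(\mathscr A),
\]
$\Delta = \Delta_1\cup\Delta_2 \subseteq K$, and $g = t^{-1}\,h\,t^{n}\in K$.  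The crucial structural fact is that $\Delta_2\subseteq N\cap X$ lies in the center of $K$:  $N$ centralizes $X$ by definition of $N$, and $t$ centralizes $X$ by the HNN presentation, so $N\cap X\subseteq Z(K)$.

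For the forward direction, suppose $h = x\cdot\pi(w)$ with $x = \pi(u)$, $u\in X(\mathscr A)^*$, and $w = a_1\cdots a_k\in L(\mathscr A)$ accepted along a run $q_1\xrightarrow{a_1}q_{i_1}\xrightarrow{a_2}\cdots\xrightarrow{a_k}q_{i_k}$ with $i_k = n$.  The telescoping product
\[
\bigl(t^{-1}\pi(a_1)t^{i_1}\bigr)\bigl(t^{-i_1}\pi(a_2)t^{i_2}\bigr)\cdots\bigl(t^{-i_{k-1}}\pi(a_k)t^{n}\bigr) \;=\; t^{-1}\,\pi(w)\,t^{n}
\]
of $\Delta_1$-generators, prepended by the product of the letters of $u$ viewed as elements of $\Delta_2$, yields $\pi(u)\cdot t^{-1}\pi(w)t^n = t^{-1}\,x\,\pi(w)\,t^n = g$, using centrality of $x$.

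For the backward direction, suppose $g = \delta_1\cdots\delta_m$ with $\delta_\ell\in\Delta$.  Using centrality of $\Delta_2$, I push all $\Delta_2$-factors to the left to obtain $g = x\,\gamma$ with $x\in\pi(X(\mathscr A)^*)$ and $\gamma$ a product of $\Delta_1$-generators, which concatenates as
\[
\gamma \;=\; t^{-i_1}\,\pi(a_1)\,t^{j_1 - i_2}\,\pi(a_2)\,t^{j_2 - i_3}\cdots \pi(a_q)\,t^{j_q},
\]
with each $(q_{i_\ell},a_\ell,q_{j_\ell})\in\delta$.  From $g = t^{-1}h t^n$ one has $\gamma = t^{-1}(x^{-1}h)t^n$ in $K = \bigast_{1_{N\cap X}} N$.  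I then invoke Britton's lemma: in order for $\gamma$ to collapse to $t^{-1}(x^{-1}h)t^n$, each intermediate mismatch $j_\ell\neq i_{\ell+1}$ must be resolved by a sequence of pinches $t^{\pm 1}y t^{\mp 1}$ with $y\in N\cap X$.  Each such pinch forces the relevant transition labels to lie in $N\cap X$ and the corresponding transitions to compose into a loop at a reachable state $q$, so the absorbed element lies in $X(q,\mathscr A)$ and gets re-absorbed into the central prefix.  The surviving transitions form an accepting run of $\mathscr A$ from $q_1 = q_0$ (forced by $i_1 = 1$) to $q_n = q_F$ (forced by $j_q = n$) labeled by some word $w'\in L(\mathscr A)$, and the combined central contribution produces $h\in\pi(X(\mathscr A)^*L(\mathscr A))$.

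The main obstacle is the Britton normal form bookkeeping in the backward direction:  one must verify that every pinch-induced cancellation really corresponds to a loop at a genuinely \emph{reachable} state of $\mathscr A$, so that the absorbed central element belongs to some $X(q,\mathscr A)$ rather than merely to $N\cap X$.  This combinatorial analysis is the technical heart of the lemma and parallels the arguments of Lemma~11 and Theorem~7 of~\cite{LohSte08}.
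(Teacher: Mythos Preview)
Your encoding has a genuine gap: by using bare powers of $t$ as state markers you lose the information needed to force Britton pinches to correspond to loops in $\mathscr A$.  Concretely, if $(q_i,a,q_j)\in\delta$ with $\pi(a)\in N\cap X$, then your generator $t^{-i}\pi(a)t^{j}$ equals $\pi(a)\,t^{\,j-i}$, so it remembers only the \emph{difference} $j-i$, not the pair $(i,j)$.  This lets unrelated transitions combine into central elements that are not in any $X(q,\mathscr A)$.  For instance, take states $q_1,q_2,q_3$ with $q_1$ initial and $q_3$ final, and transitions $(q_1,a,q_3)$ with $\pi(a)\in N\setminus X$, $(q_3,x,q_2)$ and $(q_1,y,q_2)$ with $\pi(x),\pi(y)\in N\cap X$ and $\pi(xy)\neq 1$.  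Then $L(\mathscr A)=\{a\}$ and there are no loops, so $X(\mathscr A)^*=\{1\}$ and $\pi(X(\mathscr A)^*L(\mathscr A))=\{\pi(a)\}$.  But your $\Delta_1$ contains $t^{-1}\pi(a)t^3$, $\pi(x)t^{-1}$, and $\pi(y)t$, and
\[
\bigl(t^{-1}\pi(a)t^{3}\bigr)\bigl(\pi(x)t^{-1}\bigr)\bigl(\pi(y)t\bigr)\;=\;t^{-1}\pi(axy)\,t^{3},
\]
so $g\in\Delta^*$ for $h=\pi(axy)\neq\pi(a)$.  The backward direction fails.

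The paper avoids this by encoding states as $[q]=t^{q}kt^{-q}$ with a fixed $k\in N\setminus X$.  The element $k$ acts as a blocker: $[p]^{-1}[q]=t^{p}k^{-1}t^{\,q-p}kt^{-q}$ is already Britton-reduced whenever $p\neq q$, so the state indices are never washed out by pinches (this is Claim~1 in the paper).  With this encoding one can show that any cancellable subword of a $\Delta$-product is literally an ``$X$-cycle'', i.e.\ a product of generators $[q_1]h_1[q_2]^{-1}[q_2]h_2[q_3]^{-1}\cdots[q_\ell]h_\ell[q_1]^{-1}$ with matching inner states, hence a genuine closed walk in $\mathscr A$ whose label lies in $X(q_1,\mathscr A)$.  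Your sketch asserts exactly this conclusion (``compose into a loop at a reachable state''), but your encoding does not support it; inserting the blocker $k$ is the missing idea.
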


\begin{proof}
Let $\mathscr A = (Q, \widetilde{\Sigma}, \delta, q_0, F)$.
Without loss of generality assume that $Q \subseteq \mathbb{N}$ and
$q \geq 1$ for all $q \in Q$.
By introducing $\varepsilon$-transitions, we may also assume that the set
of final states $F$ consists of a single state $q_f$. We will
construct a finite subset $\Delta \subseteq K$
and an element $g \in K$ such that
$h \in \pi(X(\mathscr A)^*L(\mathscr A))$ if and only if $g \in \Delta^*$.

Fix an element $k \in N \setminus X$ (hence also $k\inv \in N \setminus X$).  Without loss of generality we may assume that $k\in \Sigma$.
For every $q \in Q \subseteq \mathbb{N}$, define $[q] \in K$ by
\[[q] = t^q k t^{-q}.\]
Note that $[q] x = x [q]$ for all $x \in X$ since $K$ centralizes $X$. Let
\begin{equation}\label{defineDelta}
\Delta = \{ [q] c [p]^{-1} \mid (q, c, p) \in \delta \} \quad\text{ and }\quad
g = [q_0] h [q_f]^{-1} .
\end{equation}
where we abuse notation by treating $\widetilde{\Sigma}$ as if it were a subset of $N$.
Observe that in \eqref{defineDelta}, we have $c \in \widetilde{\Sigma}
\cup \{ 1 \} \subseteq N$, since we introduced $\varepsilon$-transitions.
Also $\Delta\subseteq K$.  Note that $X(\mathscr A)^*$ is contained in
$\Delta^*$.  Indeed, if $x\in X(q,\mathscr A)$, then we can write $x=a_1\cdots
a_m$ where there are transitions \[(q,a_1,q_1),\ldots, (q_{m-1},a_m,q)\in
\delta.\]  Then, since $x$ commutes with $[q]$,
we have
\[x=[q]x[q]\inv = [q]a_1[q_1]\inv [q_1]a_2[q_2]\inv \cdots
[q_{m-1}]a_m[q]\inv\in \Delta^*.\]
It follows that $X(\mathscr A)^*\subseteq \Delta^*$.

We claim that $h \in \pi(X(\mathscr A)^*L(\mathscr A))$ if and only if $g \in \Delta^*$.
Let $\Gamma=\Sigma\cup \{t\}$.  Let us define a
\emph{$X$-cycle} to be word in $\widetilde \Gamma^*$ of the form
\[[q_1] h_1 [q_2]^{-1}  [q_2] h_{2} [q_3]^{-1}
\cdots [q_{\ell-1}] h_{\ell-1} [q_\ell]^{-1} [q_\ell] h_\ell [q_1]^{-1}\]
such that $\ell \geq 1$, $q_1, \ldots, q_\ell \in Q$,
$h_1, \ldots, h_\ell \in \widetilde \Sigma^*$, and
$h_1 \cdots h_\ell$ represents an element from the subgroup $X$.
Since all elements from $X$ commute with all $[q]$ ($q \in Q$),
an $X$-cycle equals an element from $X$ in $G$ (actually an element of $N\cap X$).

A word in $\widetilde \Gamma^*$ of the form
\[[q_1] h_1 [p_1]^{-1} \cdots[q_m] h_m [p_m]^{-1},\]
where $q_1, p_1, \ldots, q_m,p_m \in Q$ and
$h_1, \ldots, h_m \in \widetilde \Sigma^*$, is called \emph{$X$-cycle-free}
if it does not contain an $X$-cycle as a factor.

\medskip
\noindent
{\em Claim 1.}\label{newclaim1}
Suppose that $p_1,\ldots, p_m,q_1,\ldots,q_m\in Q$ with $p_i\neq q_i$
($1 \leq i \leq m$) and $q_i \neq p_{i+1}$ ($1 \leq i < m$).
Then the element
\begin{equation}\label{E Claim 1 new}
g=[p_1]\inv [q_1][p_2]\inv [q_2]\cdots [p_m]\inv [q_m]
\end{equation}
has a reduced expression in the HNN extension $G$ starting with $t$ and ending with $t\inv$.

\medskip
\noindent {\em Proof of Claim 1.}
We have
\begin{eqnarray*}
\prod_{i=1}^{m} [p_i]^{-1} [q_i]
 & = & \prod_{i=1}^{m} t^{p_i} k\inv t^{-p_i} t^{q_i} k
 t^{-q_{i}}  \nonumber \\
& = & \prod_{i=1}^{m} t^{p_i} k\inv t^{q_i-p_i} k
 t^{-q_{i}}  \nonumber \\
& = & t^{p_1} \biggl(\prod_{i=1}^{m-1} k\inv t^{q_i-p_i} k
 t^{p_{i+1}-q_{i}} \biggr) k\inv t^{q_m-p_{m}} k t^{-q_m} .
\end{eqnarray*}
The latter word is a reduced word for $g$ of the required form. This establishes Claim 1.

\medskip

\noindent
Let $Y$ be the set of elements of the form \eqref{E Claim 1 new}.  It follows
immediately from the claim that any element $g'$ of the form
$h_0y_1h_1y_2h_2\cdots y_rh_r$ with $y_1, \ldots, y_r \in Y$,
$h_1,\ldots, h_{r-1}\in N\setminus X$, and $h_0,h_r\in (N\setminus
X)\cup \{1\}$ has a reduced expression for the HNN extension $G$ containing
$t$ and $t\inv$ and hence does not belong to $N$.
Such elements $g'$ will be called \emph{good}.  I.e.,
$g'$ is good if it can be written as an alternating word in elements of $Y$ and $N\setminus X$ with at least one factor from $Y$.

\medskip

\noindent
{\em Claim 2.}
Assume that in the HNN-extension $G$
\begin{equation}\label{E Claim 2}
[n] h [r]^{-1} = [q_1] h_1 [p_1]^{-1} \cdots[q_m] h_m [p_m]^{-1}
\end{equation}
where $n,r,q_1, p_1, \ldots, q_m,p_m \in Q$,
$h,h_1, \ldots, h_m \in \widetilde \Sigma^*$.
If $\prod_{i=1}^m [q_i] h_i [p_i]^{-1}$
is $X$-cycle-free, then either
$m=0$, $n=r$, and $h=1$, or all of the following hold:
\begin{itemize}
\item $m \geq 1$ and $p_i = q_{i+1}$ for all $1 \leq i < m$,
\item $q_1 =n$ and $p_m = r$, and
\item $h = h_1 \cdots h_m$ in $N$.
\end{itemize}

\medskip

\noindent {\em Proof of Claim 2.} We prove Claim 2 by induction over
$m$. If $m=0$, then \eqref{E Claim 2} becomes
\begin{equation*}
\label{L 1}
h=[n]\inv [r].
\end{equation*}
Since $h\in N$, Claim 1 immediately yields $n=r$, and hence $h=1$, as required.

Now assume that $m  \geq 1$.

\medskip

\noindent
{\em Case 1.} $m \geq 2$ and there is $1 \leq \ell < m$ such that $p_\ell = q_{\ell+1}$.
Then \eqref{E Claim 2} implies
\begin{equation*}
[n] h [r]^{-1} = \biggl( \prod_{i=1}^{\ell-1} [q_i] h_i [p_i]^{-1} \biggr)
[q_\ell] (h_\ell h_{\ell+1})[p_{\ell+1}]^{-1} \biggl( \prod_{i=\ell+2}^m [q_i] h_i [p_i]^{-1}\biggr)
\end{equation*}
in $G$.
Since $\prod_{i=1}^m [q_i] h_i [p_i]^{-1}$ from \eqref{E Claim 2} is $X$-cycle-free, also
the right hand side of this equality is $X$-cycle-free. Hence, we can apply the induction
hypothesis and obtain:
\begin{itemize}
\item $p_i = q_{i+1}$ for all $1 \leq i \leq \ell-1$ and all $\ell+1 \leq i < m$,
\item $q_1 =n$ and $p_m = r$, and
\item $h = h_1 \cdots h_m$ in $N$.
\end{itemize}
Since $p_\ell = q_{\ell+1}$ we obtain  $p_i = q_{i+1}$ for all $1 \leq i < m$.

\medskip

\noindent
{\em Case 2.} $p_i \neq q_{i+1}$ for all $1 \leq i < m$.
We have to show that $m=1$, $q_1 =n$, $p_m = r$, and $h=h_1$ in $N$.
Choose a set $T$ of coset representatives for $N/(N\cap X)$ with $1\in T$ and write $h_i=x_ih'_i$
with $h'_i\in T$ and $x_i\in N\cap X$, for $1 \leq i \leq m$.
{}From this definition we get $h_1 h_2 \cdots h_m = x h'_1 h'_2 \cdots h'_m$
where $x=x_1\cdots x_m\in N\cap X$ (since $N$ centralizes $X$). Moreover, we also have
\[\prod_{i=1}^m [q_i] h_i [p_i]^{-1} = x \prod_{i=1}^m [q_i] h'_i [p_i]^{-1}\]
since $N\cap X$ is central in $K$.

Thus, again using that $x$ is central in $K$, we have
\begin{equation}\label{newproofcase2eq1}
x\inv h= [n]\inv \left(\prod_{i=1}^m[q_i]h_i'[p_i]\inv\right)[r].
\end{equation}
Since the right hand side of \eqref{E Claim 2} is $X$-cycle-free, if $h_i'=1$
(i.e., $h_i\in X$), then $q_i\neq p_i$, for all $1 \leq i \leq m$ (else
$[q_i]h_i[p_i]\inv$ is an $X$-cycle).  Also we are assuming $p_i\neq q_{i+1}$.
If $h_i'\neq 1$, then $h_i'\in N\setminus X$.  Therefore, if both $n\neq q_1$
and $r\neq p_m$, then the right hand side of \eqref{newproofcase2eq1}
represents a good element of $G$ and so cannot belong to $N$, as was
observed just before the statement of Claim 2.  This contradicts $x\inv h\in
N$.  Similarly, suppose $n=q_1$ and $p_m\neq r$.  Then \[x\inv h=
h_1'[p_1]\inv\left(\prod_{i=2}^m[q_i]h_i'[p_i]\inv\right)[r]\] which is again
good, a contradiction.  The case $n\neq q_1$ and $p_m=r$, is handled
analogously.  If $n=q_1,p_m=r$ and $m\geq 2$, then  \[x\inv h=
h_1'[p_1]\inv\left(\prod_{i=2}^{m-1}[q_i]h_i'[p_i]\inv\right)[q_m]h_m'\]
and so is again good, a contradiction.
The only remaining case is when $n=p_1$, $r=p_m$ and $m=1$.
Then $[n]h[r]\inv =[n]h_1[r]\inv$ in $G$ and so $h=h_1$ in $N$, as required.  This proves Claim 2.
An immediate consequence of Claim 2 is:

\medskip

\noindent
{\em Claim 3.}
Assume that in the HNN-extension $G$
$$
[q_0] h [q_f]^{-1} = [q_1] a_1 [p_1]^{-1} \cdots[q_m] a_m [p_m]^{-1},
$$
where $(q_i, a_i, p_i) \in \delta$ (i.e., $[q_i] a_i [p_i]^{-1} \in \Delta$)
for $1 \leq i \leq m$. If $\prod_{i=1}^m [q_i] a_i [p_i]^{-1}$
is $X$-cycle-free, then $h \in \pi(L(\mathscr A))$.

\medskip

\noindent
The case $m \geq 1$ follows immediately from Claim~2. In case $m=0$,
Claim~2 implies $q_0 = q_f$ and $h=1$. But $q_0=q_f$ implies
$\varepsilon \in L(\mathscr A)$ and hence $h \in \pi(L(\mathscr A))$.

\medskip

\noindent
Now we are ready to prove that
$h \in \pi(X(\mathscr A)^*L(\mathscr A))$
if and only if $g = [q_0] h [q_f]^{-1} \in \Delta^*$.
First assume that $h \in \pi(X(\mathscr A)^*L(\mathscr A))$.
Let $h = xa_1 \cdots a_m$ in $N$, where
$a_1 \cdots a_m \in L(\mathscr A)$ and
$x\in X(\mathscr A)^*$.
Hence, there are states $q_0, \ldots, q_m$ with
$(q_{i-1}, a_i, q_i) \in \delta$ for $1 \leq i \leq m$,
and $q_m = q_f$.
It was observed earlier that $X(\mathscr A)^*\subseteq \Delta^*$.
Moreover, since $x$ commutes with $[q_0]$, we obtain in $G$:
\begin{equation*}
g=[q_0] h [q_f]^{-1} = x[q_0]a_1a_2 \cdots a_m[q_f]^{-1}
= x\prod_{i=1}^m [q_{i-1}] a_i [q_i]^{-1}\in \Delta^*.
\end{equation*}
Next assume that $g=[q_0] h [q_f]^{-1} \in \Delta^*$.
Thus,
\[[q_0] h [q_f]^{-1} =  [q_1] a_1 [p_1]^{-1} \cdots [q_m] a_m [p_m]^{-1}\]
in $G$,
where $q_1, p_1, \ldots, q_m,p_m \in Q$, $a_1,
\ldots, a_m \in \widetilde{\Sigma} \cup \{\varepsilon\}$, and
$(q_i,a_i, p_i) \in \delta$ for $1 \leq i \leq m$.
Every $X$-cycle that occurs in $\prod_{i=1}^m [q_i] a_i [p_i]^{-1}$
can be reduced in $G$ to an element from some subgroup of the form
$X(q,\mathscr A)$.  Moreover, since $X$ is central in $K$, we can move each of
these elements to the beginning and then to the left-hand side
$[q_0] h [q_f]^{-1}$ by taking inverses.
Performing this reduction until no further $X$-cycles remain, we obtain in
$G$ an equality
\begin{equation}
\label{X-cycle free sequence}
[q_0] x^{-1} h [q_f]^{-1} =  [n_1] b_1 [r_1]^{-1} [n_2] b_2 [r_2]^{-1}
             \cdots [n_\ell] b_\ell [r_\ell]^{-1}
\end{equation}
where $(n_i, b_i, r_i) \in \delta$ and $x\in X(\mathscr A)^*$.
Since the right-hand side of \eqref{X-cycle free sequence} is
$X$-cycle free, Claim~3 implies $x^{-1} h \in \pi(L(\mathscr A))$, i.e.,
$h \in \pi(X(\mathscr A)^*L(\mathscr A))$.
\qed
\end{proof}

Let us consider again a finite automaton $\mathscr A = (Q, \widetilde{\Sigma}, \delta, q_0, F)$ over the alphabet
$\widetilde{\Sigma}$. A subset $P \subseteq Q$ is called {\em admissible} if
$q_0 \in P$ and $P \cap F \neq \emptyset$.
For every admissible subset $P \subseteq Q$ we define the automaton $\mathscr A_P$ as follows:
\[\mathscr A_P = (P \times \{ R \mid R \subseteq P, q_0 \in R \} , \widetilde{\Sigma},
\delta_P, (q_0, \{q_0\}), \{ (q,P) \mid q \in P \cap F\} ),\]
where the transition relation $\delta_P$ is given by
\[\delta_P = \{ ( (p,R), a, (q, R\cup\{q \})) \mid p,q \in P, (p,a,q) \in
\delta\}.\]  The automaton $\mathscr A_P$ works as follows.  The construction
is such that there is a run in $\mathscr A_P$ of a word $w$ from
$(q_0,\{q_0\})$ to $(p,R)$ if and only if there is a run from $q_0$ to $p$ in
$\mathscr A$ that visits precisely the states in $R$.  In particular,
$L(\mathscr A_P)$ consists of all words $w\in L(\mathscr A)$ that label an
accepting path in $\mathscr A$ using exactly the vertices from $P$.

\begin{lemma} \label{lemma visit everything}
For every finite automaton $\mathscr A= (Q, \widetilde{\Sigma}, \delta, q_0, F)$
and $h \in N$, we have:
$h \in \pi(L(\mathscr A)) \ \Longleftrightarrow \ \exists P \subseteq Q\ \text{admissible such that}\ h \in \pi(L(\mathscr A_P))$.
\end{lemma}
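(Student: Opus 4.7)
The plan is to prove both implications directly from the construction of $\mathscr A_P$, which is a standard subset-product construction that tracks, as the second coordinate of each state, the set of states of $\mathscr A$ visited so far along the current run.

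For the $(\Leftarrow)$ direction, I would observe that the projection $(p,R)\mapsto p$ is a simulation: every transition $((p,R),a,(q,R\cup\{q\}))$ of $\delta_P$ projects to a transition $(p,a,q)$ of $\delta$, the initial state $(q_0,\{q_0\})$ projects to $q_0$, and every final state $(q,P)$ with $q\in P\cap F$ projects to some $q\in F$. Hence $L(\mathscr A_P)\subseteq L(\mathscr A)$ for every admissible $P$, and $h\in\pi(L(\mathscr A_P))$ immediately yields $h\in\pi(L(\mathscr A))$.

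For the $(\Rightarrow)$ direction, suppose $h\in\pi(L(\mathscr A))$. Pick a word $w=a_1\cdots a_m\in L(\mathscr A)$ with $\pi(w)=h$, and fix an accepting run $q_0\xrightarrow{a_1}q_1\xrightarrow{a_2}\cdots\xrightarrow{a_m}q_m$ in $\mathscr A$ with $q_m\in F$. Let $P=\{q_0,q_1,\ldots,q_m\}$ and let $R_i=\{q_0,\ldots,q_i\}$ for $0\le i\le m$. Then $q_0\in P$ and $q_m\in P\cap F$, so $P$ is admissible. By construction $R_i=R_{i-1}\cup\{q_i\}$ and each $(q_{i-1},a_i,q_i)\in\delta$ lifts to $((q_{i-1},R_{i-1}),a_i,(q_i,R_i))\in\delta_P$. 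This gives a run of $\mathscr A_P$ on $w$ from $(q_0,\{q_0\})$ to $(q_m,R_m)=(q_m,P)$, which is a final state. Hence $w\in L(\mathscr A_P)$ and $h\in\pi(L(\mathscr A_P))$.

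I do not foresee a real obstacle here: the lemma is essentially a bookkeeping statement about paths, and the only thing one has to verify is that the second component of the state in $\mathscr A_P$ genuinely tracks the set of $\mathscr A$-states visited. The inductive definition $R_i=R_{i-1}\cup\{q_i\}$ in the transition relation does exactly that, so both directions reduce to unwinding the definitions.
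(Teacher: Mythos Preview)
Your proof is correct and follows exactly the same approach as the paper: show $L(\mathscr A_P)\subseteq L(\mathscr A)$ for the backward direction, and for the forward direction take $P$ to be the set of states visited by an accepting run of a witness word $w$. The paper's version is simply terser, asserting $w\in L(\mathscr A_P)$ without spelling out the lifted run $(q_i,R_i)$ as you do.
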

\begin{proof}
Since $L(\mathscr A_P)\subseteq L(\mathscr A)$ for all admissible subsets, to
prove the lemma it suffices to observe that if $h=\pi(w)$ with $w\in
L(\mathscr A)$, then the set $P$ of states visited
by a successful run of $w$ is admissible and $w\in L(\mathscr A_P)$.\qed
\end{proof}



We are now ready to prove Lemma~\ref{techlemma}.

\subsubsection*{Proof of Lemma~\ref{techlemma}}
Assume that $G=\bigast_{1_X} H$, where $X$ is a proper finite normal subgroup of
$H$, has a decidable submonoid membership problem.  We shall exhibit an
algorithm for the rational subset membership problem for $H$.  If $H$ is
finite, there is nothing to prove.  So assume that $H$ is infinite.  Let $N$
be the centralizer of $X$ in $H$ and let $K=\langle N,t\rangle \leq G$.
Then $N$ has finite index in $H$ and so it suffices to show that $N$ has a decidable rational subset membership problem by~\cite{Gru99}.
For this, let $\mathscr A = (Q, \widetilde{\Sigma}, \delta, q_0, F)$ be a finite automaton and
let $h \in N$. We show how to decide $h \in L(\mathscr A)$.
By Lemma~\ref{lemma visit everything} it suffices
to decide whether $h \in \pi(L(\mathscr A_P))$ for some admissible subset $P \subseteq
Q$. Let us abbreviate the automaton $\mathscr A_P$ by $\mathscr B$.
Recall that the set of states of $\mathscr B$ is $P \times \{ R \subseteq P \mid
q_0 \in R \}$.

Now we apply Lemma~\ref{lemma main} and construct an element $g \in K$ and
a finite subset $\Delta \subseteq K$ such that
\[h \in \pi(X(\mathscr B)^*L(\mathscr B)) \quad
\Longleftrightarrow \quad g \in \Delta^*.\]
Hence it suffices to prove
$\pi(L(\mathscr B)) = \pi(X(\mathscr B)^*L(\mathscr B))$.

It is immediate that $\pi(L(\mathscr B)) \subseteq \pi(X(\mathscr B)^*L(\mathscr B))$. So it remains to establish that
$\pi(X(\mathscr B)^*L(\mathscr B))
\subseteq \pi(L(\mathscr B))$.  To do this, it suffices to show that
\[\pi(x L(\mathscr B))\subseteq \pi(L(\mathscr B))\] for every
$x \in X(\mathscr B)$ (then we can conclude by induction over the length
of the word from $X(\mathscr B)^*$).
Let us take $x  \in X((p,R),\mathscr B)$ for some
state $(p,R)$ of $\mathscr B$ and consider
a word $x a_1a_2 \cdots a_m$ with
$a_1 a_2 \cdots a_m \in L(\mathscr B)$.
Hence, there exist states $(q_0, R_0), \ldots, (q_m,R_m)$
of $\mathscr B$ such that:
\begin{itemize}
\item $R_0 = \{q_0\}$,
\item $((q_i,R_i),a_{i+1}, (q_{i+1},R_{i+1}))$ is a transition of
$\mathscr B$ for $0 \leq i \leq m-1$, and
\item $R_m = P$, $q_m \in P \cap F$.
\end{itemize}
Note that we have $P = \{ q_0, \ldots, q_m\}$.
Hence, there exists an $j$ such that $q_j = p$
(recall that $p\in P$ was such that $x \in  X((p,R),\mathscr B)$).
Since $X((p,R),\mathscr B)\subseteq N\cap X$ is central in $N$, we have
\[x a_1 a_2 \cdots a_m = a_1 \cdots a_j x a_{j+1} \cdots a_m\]
in $N$. Then there is
a loop at state $(p,R) = (q_j,R)$ labeled with
$c_1 \cdots c_k$ and $x = \pi(c_1 \cdots c_k)$ by definition of $X((p,R),\mathscr B)$.  In particular,
$c_1\cdots c_k$ reads a loop in $\mathscr A$ at $p$ visiting only states
contained in $R \subseteq P$.
Now, starting from the initial state $(q_0, R_0)$ we can read
the word $a_1 \cdots a_j c_1 \cdots c_k a_{j+1} \cdots a_m$ on the automaton
$\mathscr B$. The state reached after reading $a_1 \cdots a_j c_1 \cdots c_k a_{j+1}
\cdots a_\ell$, for $\ell \geq j$, is of the form
$(q_\ell, S_\ell)$, where $R_\ell \subseteq S_\ell$.
Hence, $S_m = P$. This shows that $a_1 \cdots a_j c_1 \cdots c_k
a_{j+1} \cdots a_m \in L(\mathscr B)$.  This completes the proof
that $N$ has decidable rational subset membership problem.
Therefore, $G$ has decidable rational subset membership problem.

On the other hand, if $H$ has a fixed rational subset with undecidable
membership problem, then it follows from the construction in~\cite{Gru99} that
$N$ has a fixed rational subset $\pi(L(\mathscr A))$ with undecidable
membership problem.  It follows from Lemma~\ref{lemma visit everything} that
$L(\mathscr A_P)$ has an undecidable membership
problem for some $P$.  Consequently, Lemma~\ref{lemma main}
provides a fixed finitely generated submonoid of $G$ with undecidable membership problem.

This completes the proof of Lemma~\ref{techlemma}, thereby establishing Theorem~\ref{newmain}.
\qed

\section{Concluding remarks and open problems}

We have shown that, for every group with at least two ends, the rational subset membership
problem and the submonoid membership problem are recursively equivalent.
Moreover, in a previous paper we proved that these two problems are recursively equivalent for graph
groups as well~\cite{LohSte08}.

It is easy to see that there exists a group $G$ with infinitely many ends for
which submonoid membership is undecidable but the generalized word problem
(i.e., membership in finitely generated subgroups) is decidable.
Take any group $H$ for which the generalized word problem is decidable
but submonoid membership is decidable (e.g., the free metabelian group
of rank $2$~\cite{LohSt09} or the graph group defined by a path with $4$ nodes~\cite{LohSte08}).
Then the same is true for $H\ast \mathbb{Z}$, which has infinitely many ends.

The obvious remaining open question is whether there exists a (necessarily one-ended)
finitely generated group for which rational subset membership is undecidable but submonoid
membership is decidable. We conjecture that such a group exists. We also
conjecture that decidability of submonoid membership is \emph{not} preserved
by free products, because otherwise rational subset membership and
submonoid membership would be equivalent for all finitely generated groups.

\bibliographystyle{abbrv}
\def\cprime{$'$}

\end{document}